\newtheorem{theorem}{Theorem}
\newtheorem{remark}[theorem]{Remark}
\newtheorem{corollary}[theorem]{Corollary}
\newtheorem{proposition}[theorem]{Proposition}
\newtheorem{lemma}[theorem]{Lemma}
\newtheorem{problem}{Problem}
\newenvironment{dedication}
    {\vspace{6ex}\begin{quotation}\begin{center}\begin{em}}
    {\par\end{em}\end{center}\end{quotation}}
\newcommand{\no}{\noindent}
\def\R{\mathbb{R}}
\def\Z{\mathbb{Z}}
\def\Pm{\mathbb{P}}
\def\tQ{\mathtt{Q}}
\def\tChi{\pmb{\chi}}
\def\tr{\mathtt{r}}
\def\ts{\mathtt{s}}
\def\tk{\mathtt{k}}
\def\tl{\mathtt{l}}
\def\tX{\mathtt{X}}
\def\NA{\mathcal{N}(\mathsf{A})} 
\numberwithin{equation}{section} 
\numberwithin{theorem}{section}
\title[Random coverings and the Euler characteristic]{Finite random coverings
of one--complexes\\ and the Euler characteristic
}
\author[R. Komendarczyk]{R. Komendarczyk}
\address{Department of Mathematics,
Tulane University,
New Orleans, LA 70118} \email{rako@tulane.edu}
\author[J. Pullen]{J. Pullen}
\address{Department of Mathematics,
Mercer University,
Macon, GA 31207} \email{pullen\_j@mercer.edu}
\date{\today}
\keywords{complete coverage probability, random complexes, nerves, Vietoris-Rips complex}	
\begin{document}
\setcounter{section}{0}

\begin{abstract}
 This article presents an algebraic topology perspective on the problem of finding a complete coverage probability of a one dimensional domain $X$ by a random covering, and develops techniques applicable to the problem beyond the one dimensional case. In particular we obtain a general formula for the chance that a collection of finitely many compact connected random sets placed on $X$ has a union equal to $X$. The result is derived under certain topological assumptions on the shape of the covering sets (the covering ought to be {\em good}, which holds if the diameter of the covering elements does not exceed a certain size), but no a priori requirements on their distribution. An upper bound for the coverage probability is also obtained  as a consequence of the concentration inequality.  The techniques rely on a
formulation of the coverage criteria in terms of the Euler characteristic of the nerve complex associated to the random covering.
\end{abstract}

\maketitle

\vspace{-1.5cm}
\begin{dedication}
{Dedicated to Professor Yuli Rudyak, on the occasion of his 65th birthday.}
\end{dedication}

\section{Introduction}\label{sec:introduction}

We consider {\em finite random coverings} of a metric space $X$, i.e. finite collections of {\em compact random sets}: $\mathsf{A}=\{\mathsf{A}_{\{i\}}\}$, $i=1,\ldots, n$,  understood as  measurable maps \cite[p. 121]{Hall88}
\[
 \mathsf{A}_{\{i\}}:(\Omega,\sigma,\Pm)\xrightarrow{\qquad\qquad} (\mathcal{C}(X)\sqcup \{\text{\rm \O}\},\sigma_{\text{Borel}}),
\]
where $(\Omega,\sigma,\Pm)$ is an underlying probability space, and $\mathcal{C}(X)$ is the set of nonempty compact subsets of $X$, topologized by the Hausdorff distance and given the associated Borel algebra $\sigma_{\text{Borel}}$. The set $\mathcal{C}(X)\sqcup \{\text{\rm \O}\}$ is a disjoint union with the point $\{\text{\rm \O}\}$, which plays a role of the empty set.  The term {\em covering} may be misleading in this context, as it sometimes assumes that the union of its elements contains the domain $X$. In this work a covering is simply a collection of subsets of $X$, as it has been previously used e.g. in \cite{Flatto73, Flatto-Newman77}.

A typical example of an infinite random covering is a {\em coverage process} on the Euclidean space, \cite{Hall88} i.e. a sequence of random sets: $\mathsf{A}=\{\xi_1+G_1,\xi_2+G_2,\cdots,\xi_k+G_k,\ldots \}$, where $\{G_k\}$ is a fixed family of subsets of $\R^n$ called {\em grains} of the process, and $\pmb\xi=\{\xi_i\}$ a sequence of random vectors in $\R^n$. In applications  $G_i$'s are often round balls of a fixed radius, and $\pmb\xi$ defines a Poisson process (in which case $\mathsf{A}$ is refereed to as a {\em Boolean model}, \cite{Hall88}). In the current paper, we make no a priori assumptions on the distribution of $\mathsf{A}$ except a topological requirement on the covering, namely it almost surely must be {\em good}, which means that
each intersection $\bigcap_{i\in I} \mathsf{A}_{\{i\}}$, $I=\{i_1,\ldots, i_k\}$ is almost surely contractible (in general some form of convexity of $\mathsf{A}_{\{i\}}$ validates this assumption).

\begin{problem}\label{pr:main-problem}
Given a random covering $\{\mathsf{A}_{\{i\}}\}$, $i=1,\ldots,n$ of a metric space $X$, find a complete coverage probability: $\Pm\bigl(X\subseteq |\mathsf{A}|\bigr)$, where $|\mathsf{A}|=\bigcup_i\mathsf{A}_{\{i\}}$.
\end{problem}

Reviewing the history of Problem \ref{pr:main-problem}: it was first considered by Whitworth, \cite{Whitworth1897}  in the basic case of a finite collection of independent identically distributed fixed $\alpha$--length  arcs on a unit circumference circle.  Much later, Stevens \cite{Stevens39}  provided a complete answer to the question of Whitworth in the form
 \begin{equation}\label{eq:coverage-S^1}
  \mathbb{P}(S^1\subseteq |\mathsf{A}|)=\sum^{\lfloor \frac{1}{\alpha} \rfloor}_{j=1} (-1)^{j+1} {n \choose j}\Bigl(1-j \alpha\Bigr)^{n-1}\ . 
 \end{equation}
The Stevens' result was further improved by Siegel and Holst \cite{Siegel-Holst82} where they
allowed varying lengths for the arcs. In \cite{Flatto73},  Flatto obtained an asymptotic expression for coverage as $\alpha\to 0$. 
The extension of the circle problem to the 2-sphere $S^2$ was considered by Moran and Groth \cite{Morgan-Groth62}, who derived an approximation for the probability $\mathbb{P}(S^2\subset |\mathsf{A}|)$, and later Gilbert \cite{Gilbert65} showed the bounds 
\[
(1-\lambda)^n \leq P(S^2\subseteq |\mathsf{A}|)\leq \frac 43 n(n-1)\lambda(1-\lambda)^{n-1},
\]  
where $\lambda=(\sin\frac{\alpha}{2})^2$ is the fraction of the surface of $S^2$ covered by spherical $\alpha$--caps; i.e. caps of radius 
$\alpha$. For $\alpha\in [\frac \pi2,\pi]$, the explicit expression for $P(S^2\subseteq |\mathsf{A}|)$ has been found by Miles \cite{Miles69}. Work in \cite{Burgisser-Cucker-Lotz2010} provides explicit formulas for the complete coverage probability for $\alpha$--caps on the $m$-dimensional unit sphere $S^m$ when $\alpha\in [\frac \pi2,\pi]$ and upper bounds for $\alpha\in [0,\pi)$.  The literature concerning the  coverage probability in the asymptotic regimes (where the diameter of grains tends to zero) is vast and we only list a small fraction here \cite{Flatto-Newman77, Siegel79, Athreya-Roy-Sarkar04, Molchanov-Scherbakov03, Bollobas-Bela-Riordan11}. Further, the reader may consult the recent work in \cite{Burgisser-Cucker-Lotz2010} for a more accurate account of the history of Problem \ref{pr:main-problem}.

In this work we focus solely on the case of finite coverings of $1$-dimensional domains $X$ which are homeomorphic to 
finite multigraphs, equipped with an intrinsic distance
\begin{equation}\label{eq:d_X}
 d_X(x,y)=\min_{\substack{\gamma:[0,1]\mapsto X,\\ \gamma(0)=x,\ \gamma(1)=y.}} \text{\rm length}(\gamma).
\end{equation}
I.e. $d_X(x,y)$ is the length of the shortest path between $x$ an $y$, which in practice is just a smallest sum of edge-lengths (and their pieces) connecting $x$ and $y$ (the lengths come from some choice of geometric realization of $X$ in $\R^3$).
Let $\partial X$ denote the set of {\em leaf vertices} of $X$, 
(this notation is justified by the case when $X$ is an interval in $\R$) and $\text{diam}(Y)$ the intrinsic diameter of a subset $Y\subseteq X$.
Our random covering  $\mathsf{A}=\{\mathsf{A}_{\{i\}}\}$ on $X$, will always be finite ($i=1,\ldots, n$). 
As already mentioned before, the basic example of a random covering is $\epsilon$-balls: $\{B(\pmb\xi_i,\epsilon)\}$, $i=1,\ldots, n$ in the intrinsic metric $d_X$, with centers $\pmb\xi_i$ distributed in an arbitrary fashion.  We approach the coverage problem 
by considering a random complex $\mathcal{N}(\mathsf{A})$ directly obtained from  the usual topological nerve (c.f. \cite{Wallace70}) of realizations of $\mathsf{A}$ and its Euler characteristic $\tChi(\mathsf{A})=\tChi(\mathcal{N}(\mathsf{A}))$. 

Let $\mathfrak{C}_n$ be the set of labeled abstract subcomplexes on $n$ vertices (i.e. subcomplexes of the full $(n-1)$--simplex $\pmb\Delta_n$). By the {\em labeling} we understand that every subcomplex $\ts\in \mathfrak{C}_n$ comes with an indexing of its vertices by numbers from $1,\ldots, n$.  Elements  $\ts,\tr,\tk\in \mathfrak{C}_n$ can be identified, in a non-unique way, with subsets of the power set $2^{[n]}$, $[n]=\{1,\ldots,n\}$ (see Section \ref{sec:rand-compx}.)  For instance, a singleton $\tr=\{I\}$ (where  $I\subseteq\{1,\ldots, n\}$) labels a face of  $\pmb\Delta_n$.  
By a {\em finite random complex} on $n$ vertices we understand an arbitrary discrete probability space $\mathsf{K}=\bigl(\mathfrak{C}_n,\mathbb{P}_\mathsf{K}\bigr)$. In order to define  the random nerve $\mathcal{N}(\mathsf{A})$, one builds a distribution on $\mathfrak{C}_n$ in a way dictated by the usual nerve construction. For instance, the probability of a $k$--face $I=\{i_1,\ldots,i_{k+1}\}$ in $\mathfrak{C}_n$, we denote by $p_I$ equals
\begin{equation}\label{eq:p_I}
 p_{I}=\Pm(\{\ts\in \mathfrak{C}_n\ |\ I\in \ts\})=\Pm\bigl(\mathsf{A}_{\{i_1\}}\cap\mathsf{A}_{\{i_2\}}\cap\ldots \cap\mathsf{A}_{\{i_{k+1}\}}\neq \text{\rm \O}\bigr).   
\end{equation}
 Identity \eqref{eq:p_I} can then be extended from faces to subcomplexes. I.e. given $\ts\in\mathfrak{C}_n$ 
\begin{equation}\label{eq:p_s}
\begin{split}
p_{\ts} & =\Pm(\{\tr\in \mathfrak{C}_n\ |\ \ts\subseteq \tr\})=\Pm\bigr(\forall_{I\in \ts}\bigl\{\bigcap_{i\in I} \mathsf{A}_{\{i\}}\neq \text{\rm \O}\bigr\}\,\bigl),\\
P_{\ts} & =\Pm(\ts)=\Pm\bigr(\forall_{I\in \ts}\bigl\{\bigcap_{i\in I} \mathsf{A}_{\{i\}}\neq \text{\rm \O}\bigr\},\forall_{\{J\}\not\in \ts}\bigl\{\bigcap_{j\in J} \mathsf{A}_{\{j\}}= \text{\rm \O}\bigr\}\,\bigl).
\end{split}
\end{equation}
\no  Generally, we make an underlying assumption that the covering $\mathsf{A}$ is {\em good} i.e. to satisfy (almost surely) the hypotheses of the Nerve Lemma (c.f. Section \ref{sec:topological}). In Proposition \ref{prop:good-rips-eps-cover} of Section \ref{sec:nerve-A-rips}, we show that a covering $\mathsf{A}$ of a $1$-complex $X$ is always good, if its elements $\mathsf{A}_{\{i\}}$ are  connected and sufficiently small in diameter. The first version of our main theorem is stated below.

\begin{theorem}[Coverage probability for compact connected 1--complexes  $X$, with $\partial X=\text{\rm \O}$]\label{thm:euler-coverage}
 Let $\mathsf{A}=\{\mathsf{A}_{\{i\}}\}$, $i=1,\ldots,n$ be a random good covering of $X$ (with $\partial X=\text{\rm \O}$). Then, the range of $\tChi=\tChi(\mathsf{A})$ can be restricted to 
\begin{equation}\label{eq:range-chi-rel}
 \underline{m}=\chi(X)\leq \tChi(\mathsf{A})\leq n=\overline{m},
\end{equation}
and the complete coverage probability equals
\begin{equation}\label{eq:cover-prob-euler}
\begin{split}
 \Pm(X\subseteq |\mathsf{A}|) & =\Pm\bigl(\tChi(\mathsf{A})=\chi(X)\bigr)\\
 & =\sum_{\ts\in \mathfrak{C}_n; \chi(\ts)=\chi(X)} P_{\ts}=\sum_{\ts\in \mathfrak{C}_n} a_{\ts}(\tChi)\,p_{\ts},
 \end{split}
\end{equation}
where
\[
a_\ts(\tChi) =\sum_{k=0}^{N}v_{k}(\tChi)\,c_{\ts,k}(\tChi),\quad N=\overline{m}-\underline{m},
\]
with $p_{\ts}$ given in \eqref{eq:p_s}, and 
\begin{align}\label{eq:v-coeff-euler}
v_{k}(\tChi)  & = 
\frac{(-1)^{k}}{N!}\,\sum_{\underline{m}<j_1<j_2<\ldots<j_{N-k}\leq\overline{m}} j_1 j_2 \ldots j_{N-k},\quad\quad v_{N}(\tChi)=\frac{(-1)^{N}}{N!}, \\
\notag c_{\ts,k}(\tChi) 
& =\begin{cases}
{\displaystyle \sum^{r^+_{top}(\ts)}_{i=0}\sum^{r^-_{top}(\ts)}_{j=0} (-1)^{r_{top}(\ts)-i-j}{r^+_{top}(\ts) \choose i}{r^-_{top}(\ts) \choose j}\Bigl(i-j+r^+_{low}(\ts)-r^-_{low}(\ts)\Bigr)^{k}},  & \\
 \qquad \text{if}\ k\geq r(\ts), & \\
0, \\ 
\qquad \text{if}\ k<r(\ts),  
\end{cases}
\end{align}
where  $r^{\pm}=r^{\pm}(\ts)$, $r^{\pm}_{top}=r^{\pm}_{top}(\ts)$, $r^{\pm}_{low}(\ts)=r^\pm-r^{\pm}_{top}(\ts)$ stand for a number of respectively total, top and lower: even(odd) dimensional faces of $\ts\in \mathfrak{C}_n$, and $r(\ts)$ denotes a number of all faces. 

 If the diameter of $\mathsf{A}_{\{i\}}$ is smaller than
$\frac{1}{6} \mathcal{C}$ almost surely (where $\mathcal{C}$ is a length of the shortest cycle in $X$, known as \emph{girth}), then $p_\ts$ further simplifies as
\begin{equation}\label{eq:p_s-rips}
 p_{\ts}=\Pm\bigr(\forall_{(i,j)\in E(\ts),i<j}\bigl\{\mathsf{A}_{\{i\}}\cap \mathsf{A}_{\{j\}}\neq \text{\rm \O}\bigr\}\,\bigl),
\end{equation}
where $E(\ts)$ is the edge set of $\ts$.
\end{theorem}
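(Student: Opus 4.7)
The proof naturally divides into (A) reducing the coverage event to a value of $\tChi$ via the Nerve Lemma, and (B) evaluating the resulting probability by Lagrange interpolation combined with a combinatorial expansion of $\tChi^k$.

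\textbf{Stage A.} Since $\mathsf{A}$ is good, the Nerve Lemma gives $\mathcal{N}(\mathsf{A})\simeq |\mathsf{A}|$ almost surely, so $\tChi(\mathsf{A}) = \chi(|\mathsf{A}|)$; the implication $X\subset |\mathsf{A}| \Rightarrow \tChi = \chi(X)$ is then immediate. For the converse I would establish the comparison: every proper compact subset $Y\subsetneq X$ satisfies $\chi(Y) > \chi(X)$. Refining the $\Delta$-structure on $X$ so that $Y$ is a subcomplex, write $\chi(X)-\chi(Y) = V_0 - E_0$ where $V_0, E_0$ count the vertices and edges of $X$ absent from $Y$. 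The hypothesis $\partial X = \varnothing$ forces every vertex of $X$ to have degree $\geq 2$, so the sum of degrees over $V_0$ is $\geq 2V_0$; combined with connectedness of $X$ this yields the strict inequality $E_0 > V_0$. The same counting delivers the range \eqref{eq:range-chi-rel}, whose upper bound is clear from $\chi(|\mathsf{A}|) = b_0(|\mathsf{A}|) - b_1(|\mathsf{A}|)\leq n$.

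\textbf{Stage B.} Let
\[
 L(x) \;=\; \frac{\prod_{j=\underline{m}+1}^{\overline{m}}(x-j)}{\prod_{j=\underline{m}+1}^{\overline{m}}(\underline{m}-j)} \;=\; \sum_{k=0}^{N}v_k(\tChi)\,x^k
\]
be the unique degree-$N$ Lagrange polynomial with $L(\underline{m})=1$ and $L(j)=0$ for $j = \underline{m}+1,\ldots,\overline{m}$. Since $\prod_{j=\underline{m}+1}^{\overline{m}}(\underline{m}-j) = (-1)^N N!$, expanding the numerator and reading off the coefficient of $x^k$ reproduces the elementary-symmetric-function formula for $v_k(\tChi)$ in \eqref{eq:v-coeff-euler}. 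By Stage A, $L(\tChi) = \mathbf{1}[\tChi = \chi(X)]$ a.s., so
\[
 \Pm\bigl(X\subset|\mathsf{A}|\bigr) \;=\; \tE[L(\tChi)] \;=\; \sum_{k=0}^{N}v_k(\tChi)\,\tE\bigl[\tChi(\mathsf{A})^k\bigr].
\]
To unwind $\tE[\tChi^k]$ I would write $\tChi(\mathsf{A}) = \sum_{I\neq\varnothing}(-1)^{|I|-1}Y_I$ with $Y_I = \mathbf{1}[\cap_{i\in I}\mathsf{A}_i\neq\varnothing]$, raise to the $k$th power, and use $Y_I^2 = Y_I$ together with $\tE\bigl[\prod_j Y_{I_j}\bigr] = p_{\ts(I_1,\ldots,I_k)}$, where $\ts(I_1,\ldots,I_k)$ denotes the subcomplex generated by $\{I_1,\ldots,I_k\}$. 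Sorting tuples by the generated subcomplex gives $\tE[\tChi^k] = \sum_\ts c_{\ts,k}\,p_\ts$, where $c_{\ts,k}$ is the signed count of $k$-tuples in $\ts^k$ hitting every top face of $\ts$. Inclusion-exclusion over which top faces are forbidden produces
\[
 c_{\ts,k}(\tChi) \;=\; \sum_{T\subset\mathrm{top}(\ts)}(-1)^{r_{\mathrm{top}}(\ts)-|T|}\,\chi\bigl(\mathrm{low}(\ts)\cup T\bigr)^{k},
\]
which, after decomposing $T = T^+\sqcup T^-$ into its even/odd-dimensional parts of sizes $i$ and $j$, rearranges into the closed form stated in \eqref{eq:v-coeff-euler}; the vanishing for small $k$ follows from the finite-difference identity $\sum_\alpha(-1)^\alpha\binom{n}{\alpha}p(\alpha) = 0$ for polynomials $p$ of degree less than $n$. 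Setting $a_\ts = \sum_k v_k c_{\ts,k}$ and interchanging the orders of summation completes \eqref{eq:cover-prob-euler}.

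\textbf{Rips simplification and main obstacle.} Under $\mathrm{diam}(\mathsf{A}_i) < \mathcal{C}/6$ a.s., Proposition \ref{prop:good-rips-eps-cover} identifies $\mathcal{N}(\mathsf{A})$ with the Vietoris--Rips complex determined by pairwise intersections, so $\ts\subset\mathcal{N}(\mathsf{A})$ becomes equivalent to $\mathsf{A}_i\cap\mathsf{A}_j\neq\varnothing$ for every pair $i\neq j\in V(\ts)$, giving \eqref{eq:p_s-rips}. The most delicate point in the whole argument is the combinatorial derivation of $c_{\ts,k}$ in explicit closed form: one must separate top from lower faces by parity of dimension and appeal to the finite-difference vanishing of low-degree polynomials to pin down the range of $k$ for which the coefficient survives.
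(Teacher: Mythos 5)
Your proposal is correct and follows the same overall architecture as the paper: reduce coverage to the event $\tChi(\mathsf{A})=\chi(X)$ via the Nerve Lemma, recover $\Pm(\tChi=\underline{m})$ from the moments $\tE[\tChi^k]$ by interpolation, and compute $c_{\ts,k}$ by inclusion--exclusion over top faces. Your Lagrange polynomial $L$ is exactly the $j=0$ row of the inverse Vandermonde matrix used in Lemma \ref{lem:X-dist}, and your tuple-sorting of $\tChi^k$ is in substance the paper's multinomial derivation of \eqref{eq:c-euler-simple}. The one genuinely different ingredient is the converse in Stage A: the paper proves $\chi(U)\ge\chi(X)$, with strictness for proper subsets, via Mayer--Vietoris (Proposition \ref{prop:chi-ineq}) and a collapse onto a bouquet of circles (Corollary \ref{cor:chi-covers-no-bdry}), whereas you count missing vertices and edges and use degree sums. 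This is more elementary and does work, but as written the degree sum only yields $E_0\ge V_0$; equality forces every missing edge to have both endpoints missing and every missing vertex to have degree exactly $2$, so the missing part is a union of full cycles of $X$, and connectedness gives a contradiction only after you also exclude $Y=\varnothing$ (for $X=S^1$ and $Y=\varnothing$ one has $E_0=V_0$ and $\chi(Y)=\chi(X)=0$, a degenerate case the paper's own argument also glosses over). You should spell out that one extra line. A final remark in your favor: your finite-difference argument establishes $c_{\ts,k}(\tChi)=0$ for $k<r_{top}(\ts)$, not for $k<r(\ts)$ as the theorem states; the worked example of Section \ref{sec:stevens-chi} (the coefficient $1-2^k$ of $axy$ is $-1\neq 0$ at $k=1<3=r$) confirms that the correct threshold in \eqref{eq:v-coeff-euler} is $r_{top}(\ts)$, so your derivation pins down the right condition where the stated one is off.
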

\no An extension of the above result to the case of a 1--complex $X$ with no assumptions on 
$\partial X$ is provided in Theorem \ref{thm:euler-coverage-rel} of Section \ref{sec:complete-cover-prob}. One obvious corollary of the above result is the fact that the complete coverage probability of any good random covering $\{\mathsf{A}_{\{i\}}\}$ is determined by finitely many numbers, which is not obvious when considering e.g. {\em vacancy}, i.e. the volume of $X-|\mathsf{A}|$ c.f. \cite{Hall88}. The complexity of computing $\Pm(X\subseteq |\mathsf{A}|)$ via the formula of Theorem \ref{thm:euler-coverage} is not addressed here. However, one may expect that, due to the size of the set $\mathfrak{C}_n$, computation of coefficients $a_\ts(\tChi)$ or the set $\{\ts\in \mathfrak{C}_n\ |\ \tChi(\ts)=\chi(X)\}$ is double exponentially hard in $n$. On a positive note, coefficients $a_\ts(\tChi)$ are independent of the underlying distribution vector $(p_\ts)$, therefore once computed for a certain size problem can be reapplied as $(p_\ts)$ changes. The vector $(p_\ts)$ can be conveniently estimated numerically (e.g. via the standard maximum likelihood estimation, c.f. \cite{Lehmann-book}) but again in the simplest case of Equation \eqref{eq:p_s-rips} it is of exponential size:  $2^{{n \choose 2}}$. 
Therefore, in practical situations the formula derived in Theorem \ref{thm:euler-coverage} can apply to the covering problems with small $n$. 

In a longer perspective, one may be interested asymptotic distributions of $\tChi(\mathsf{A})$ (as $n\to \infty$) which would lead to parametric estimators or useful bounds for $\Pm(X\subseteq |\mathsf{A}|)$. Currently available results (e.g. in \cite{Kahle-Meckes10,Kahle09a}) concern 
sparse regimes and they are not applicable, unless we allow the diameter of random sets in $\mathsf{A}=\{\mathsf{A}_{\{i\}}\}$ to 
tend to $0$ sufficiently fast as $n$ tends to infinity (see e.g. \cite{Flatto-Newman77}). Concerning the question of useful bounds for $\Pm(X\subseteq |\mathsf{A}|)$, as a first step we derive an upper bound for the coverage probability, via the concentration inequality \cite{Azuma67} in the following 
\begin{theorem}\label{thm:coverage-upperbounds}
 Let $\mathsf{A}=\{\mathsf{A}_{\{i\}}\}$, $i=1,\ldots,n$ be a random good covering of $X$,  then
\begin{equation}\label{eq:cov-est-rips}
\Pm(X\subseteq |\mathsf{A}|)\leq \exp\Bigl(\frac{-\mu^2_0}{2n(|\chi_{rel}(X,\partial X)|+2)^2}\Bigr),
\end{equation}
where $\mu_0$ denotes the expected value of the relative Euler characteristic $\tChi_{rel}(\mathsf{A},\mathsf{A}_{\partial X})$ of the random pair~$(\mathcal{N}(\mathsf{A}),\mathcal{N}(\mathsf{A}_{\partial X}))$.
\end{theorem}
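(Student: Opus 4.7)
The plan is to apply the Azuma-Hoeffding concentration inequality to a Doob martingale associated with $\tChi_{rel}(\mathsf{A},\mathsf{A}_{\partial X})$, after first reformulating the coverage event in terms of the Euler characteristic.

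As the first step, the relative version of Theorem~\ref{thm:euler-coverage} (i.e., Theorem~\ref{thm:euler-coverage-rel} referenced above) identifies $\{X\subset|\mathsf{A}|\}$ with $\{\tChi_{rel}(\mathsf{A},\mathsf{A}_{\partial X})=\chi_{rel}(X,\partial X)\}$, which is the event that $\tChi_{rel}$ attains its minimal value in its admissible range. For any non-trivial connected 1-complex one has $\chi_{rel}(X,\partial X)=V-E-|\partial X|\leq 0$, so the coverage event is contained in $\{\tChi_{rel}(\mathsf{A},\mathsf{A}_{\partial X})\leq 0\}$, and consequently
\begin{equation*}
\Pm(X\subset|\mathsf{A}|)\leq\Pm\bigl(\mu_0-\tChi_{rel}(\mathsf{A},\mathsf{A}_{\partial X})\geq\mu_0\bigr).
\end{equation*}
Next I would fix an arbitrary ordering of the $\mathsf{A}_i$ and introduce the Doob martingale $M_k=\mathbb{E}[\tChi_{rel}(\mathsf{A},\mathsf{A}_{\partial X})\mid\sigma(\mathsf{A}_1,\ldots,\mathsf{A}_k)]$, for which $M_0=\mu_0$ and $M_n=\tChi_{rel}(\mathsf{A},\mathsf{A}_{\partial X})$. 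By Azuma \cite{Azuma67},
\begin{equation*}
\Pm(M_0-M_n\geq\mu_0)\leq\exp\Bigl(\frac{-\mu_0^2}{2\sum_{k=1}^n c_k^2}\Bigr),
\end{equation*}
so the whole argument reduces to establishing the bounded-differences estimate $|M_k-M_{k-1}|\leq c_k\leq |\chi_{rel}(X,\partial X)|+2$ almost surely.

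The main obstacle is this bounded-differences estimate, which I would prove by coupling: fix the $\mathsf{A}_j$ for $j\neq k$, replace $\mathsf{A}_k$ by an independent copy $\mathsf{A}'_k$, and bound the resulting change in $\tChi_{rel}$ uniformly in the realization. By the Nerve Lemma (applicable a.s.\ since $\mathsf{A}$ is good), $\tChi_{rel}(\mathsf{A},\mathsf{A}_{\partial X})$ equals the relative Euler characteristic of the pair of sub-1-complexes $(|\mathsf{A}|,|\mathsf{A}_{\partial X}|)\subset(X,\partial X)$. Setting $Y=\bigcup_{j\neq k}\mathsf{A}_j$, Mayer-Vietoris for $(Y,\mathsf{A}_k)$ expresses the swap difference as
\begin{equation*}
\chi(Y\cup\mathsf{A}_k)-\chi(Y\cup\mathsf{A}'_k)=\bigl[\chi(\mathsf{A}_k)-\chi(\mathsf{A}'_k)\bigr]-\bigl[\chi(Y\cap\mathsf{A}_k)-\chi(Y\cap\mathsf{A}'_k)\bigr].
\end{equation*}
Since $\mathsf{A}_k,\mathsf{A}'_k$ are connected sub-1-complexes of $X$, the first bracket is controlled by $b_1(X)\leq|\chi_{rel}(X,\partial X)|+1$. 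The delicate part is the second bracket: its component counts could a priori be large, but they are constrained by the connectedness of $\mathsf{A}_k$ together with the topology of $X$, and a careful accounting (plus one additive unit for the relative correction coming from $\mathsf{A}_{\partial X}\subset\partial X$) brings the total to the claimed $|\chi_{rel}(X,\partial X)|+2$. Substituting $c_k=|\chi_{rel}(X,\partial X)|+2$ into the Azuma bound above then yields \eqref{eq:cov-est-rips}.
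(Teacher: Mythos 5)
Your overall strategy coincides with the paper's: rewrite the coverage event as $\{\tChi_{rel}(\mathsf{A},\mathsf{A}_{\partial X})$ attaining the bottom of its range$\}$, expose the sets $\mathsf{A}_1,\ldots,\mathsf{A}_n$ one at a time to get a Doob (vertex-exposure) martingale, and apply Azuma with $c_k=|\chi_{rel}(X,\partial X)|+2$. The first two stages are fine (modulo the fact that your $\mu_0=\mathbb{E}(\tChi_{rel})$ must be positive for the deviation $a=\mu_0$ to be admissible; the paper works with the shifted variable $\tChi_0=\tChi_{rel}-\chi_{rel}(X,\partial X)\geq 0$, whose mean is automatically nonnegative). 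The genuine gap is the third stage: the bounded-differences estimate is the entire content of the theorem, and you do not prove it — you defer it to ``a careful accounting'' that ``brings the total to the claimed bound.'' That is precisely the step a referee would demand.

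Moreover, the particular route you sketch cannot deliver a constant independent of $n$. In your Mayer--Vietoris identity the term $\chi(Y\cap\mathsf{A}_k)$ with $Y=\bigcup_{j\neq k}\mathsf{A}_j$ is a union of up to $n-1$ contractible pieces and can equal $n-1$; connectedness of $\mathsf{A}_k$ does not rescue you. Concretely, on $X=S^1$ (so $|\chi_{rel}|+2=2$) let $Y$ consist of $m$ pairwise disjoint short arcs and let $\mathsf{A}_k$ be a single long arc meeting $m-1$ of them: swapping $\mathsf{A}_k$ for a tiny arc changes $\pmb\beta_0(|\mathsf{A}|)$, hence $\tChi(\mathsf{A})$, by $m-2$, which exceeds $2$ once $m\geq 5$. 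So the raw one-coordinate oscillation of $\tChi_{rel}$ is not bounded by $|\chi_{rel}(X,\partial X)|+2$, and the coupling/Lipschitz argument you propose breaks at exactly this point. The paper's own proof takes a different tack here: it writes $\tChi_{rel}(\mathsf{A},\mathsf{A}_{\partial X})=\pmb\beta_0-\pmb\beta_1$ and bounds the martingale increments by asserting that exposing one vertex of the nerve changes $\pmb\beta_0$ by at most $1$ and $\pmb\beta_1$ by at most $1-\chi_{rel}(X,\partial X)$. If you want to complete your write-up you must either reproduce and justify that increment bound for the conditional expectations $M_k-M_{k-1}$ (which is not the same as a worst-case swap bound, as the $S^1$ example shows), or find a substitute for the Azuma hypothesis; as written, the proposal does not establish \eqref{eq:cov-est-rips}.
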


Although, Theorem \ref{thm:euler-coverage} is restricted to the case of $1$--complexes, the question of complete coverage probability for such spaces is not without a practical meaning. One may consider the $1$--complex to be e.g. a system of streets in the city or underground channels. In such cases random coverings can be associated with sensing regions of e.g. vehicles equipped with sensors (c.f. \cite{deSilva-Ghrist06}). 
 Beyond $1$--complexes, techniques of algebraic topology provide coverage criteria for higher dimensional objects. For instance, if an underlying space $X$ is an $m$-dimensional manifold a necessary and sufficient condition for coverage is nonvanishing of the $m$-th(top) Betti number of the nerve $\mathcal{N}(\mathsf{A})$. We aim to develop these ideas in subsequent papers. 

 The article is organized as follows: In Section \ref{sec:r-complex} we further discuss  the general setup of random complexes and their associated invariants -- mainly $\tChi(\mathsf{A})$. In Section \ref{sec:moments-dist}, we derive relevant formulas for distributions of the random relative Euler characteristic, see Corollary \ref{cor:euler-dist-rel}. Further, in Section \ref{sec:topological}, we prove several basic topological results showing that the Euler characteristic of the nerve of a covering determines complete coverage of a 1--complex. In Proposition \ref{prop:good-rips-eps-cover}, we also provide a sufficient condition for a covering to be good (in terms of the girth of a $1$--complex). We collect relevant facts  and prove Theorem \ref{thm:euler-coverage} in Section \ref{sec:complete-cover-prob}. The upper bound for $\Pm(X\subseteq |\mathsf{A}|)$ of Theorem \ref{thm:coverage-upperbounds} is shown in Section \ref{sec:estimates}.

\subsection*{Acknowledgments} We wish to thank Gustavo Didier for discussions and Tewodros Amdeberhan for many useful suggestions and help with combinatorial aspects of  Section \ref{sec:chi-dist}. We are also grateful to Robert Ghrist for introducing us to the subject both via his work \cite{deSilva-Ghrist06, deSilva-Ghrist07b, deSilva-Ghrist07a} with vin de Silva on coverage problems in sensor networks, and inspiring conversations during the {\em First National Forum of Young Topologists} at Tulane University in 2009. The first author would like to thank the organizers of {\em Applied Topology Conference} in  Bedlewo, 2013. Both authors wish to thank the anonymous referee for corrections to the manuscript.

Theorem \ref{thm:euler-coverage} is a part of the doctoral thesis of \cite{Pul12} of the second author.
Both authors acknowledge the support of NSF DMS \#1043009. The first author was also partially supported by 
DARPA YFA N66001-11-1-4132.

\section{Random complexes and their topological invariants.}\label{sec:r-complex}
\subsection{Random complexes}\label{sec:rand-compx}
We refer the reader to \cite{Hatcher02} for background on algebraic topology.
Consider  $\mathbf{\Delta}_n$ to be a full simplex on $n$--vertices indexed by $1,\ldots,n$ (geometrically $\mathbf{\Delta}_n$ is the convex hull of $n$ points given by the standard basis vectors in $\R^{n+1}$). Recall that a $d$-dimensional face in $\mathbf{\Delta}_n$ can indexed by the collection of its $d+1$ vertices: $I=\{i_1, i_2,\ldots, i_{d+1}\}$, where $1\leq i_1<i_2<\ldots <i_{d+1}\leq n$.  Denote the set of all faces of $\mathbf{\Delta}_n$ by $f(n)$, and particularly, $d$--dimensional faces by $f_d(n)$. I.e.
\begin{equation}\label{eq:f(n)-f_d(n)}
f(n) = \{I\ |\ I\subseteq 2^{[n]}\},\qquad f_d(n) = \{I\ |\ I\subseteq 2^{[n]}, |I|=d+1\}.
\end{equation}
Consider the set of all {\it labeled sub-complexes} $\mathfrak{C}_n$ of $\mathbf{\Delta}_n$ union a special point $\{\text{\rm \O}\}$ playing a role of the empty set. By a  {\em labeled sub-complex} we understand a subcomplex of $\mathbf{\Delta}_n$, determined by all its faces with labeling given by vertices of $\pmb\Delta_n$.
A natural set to consider for enumerating labeled subcomplexes is the power set $2^{f(n)}$ of $f(n)$, which is further denoted by $\mathfrak{P}_n$ (we assume $\mathfrak{P}_n$ contains the empty set). 
Here and thereafter, we use notation $\ts$, $\tr$, $\tk$ for elements of both $\mathfrak{C}_n$ and $\mathfrak{P}_n$.

Clearly, there is a surjective correspondence $\Pi:\mathfrak{P}_n\mapsto \mathfrak{C}_n$ which to a given subset $\ts\in \mathfrak{P}_n$ assigns a subcomplex $\Pi(\ts)$ in $\mathfrak{C}_n$ given by the union of elements $I$ of $\ts$ and their subsets (i.e. the lattice of subsets associated to the faces of subcomplex $\ts$). $\Pi$ is clearly not bijective, however, with certain choices we may easily build right inverses. In particular, we will be interested in two cases, which we refer to as the {\it antichain} and {\it chain} representations: $\widehat{\cdot}:\mathfrak{C}_n\mapsto \mathfrak{P}_n$, $\widetilde{\cdot}:\mathfrak{C}_n\mapsto \mathfrak{P}_n$. The antichain representative $\widehat{\ts}\in \mathfrak{P}_n$ of $\ts\in\mathfrak{C}_n$,  contains only its top dimensional faces, also known as {\em facets}, i.e. 
\[
\widehat{\ts}=\{I\in \ts\ |\ \text{such that}\ \text{for any}\ J\in \ts, J\neq I\ \text{either}\ J\subset I\ \text{or}\ (J\not\subseteq I\ \text{and}\ I\not\subseteq J)\}.
\]
The chain representative $\widetilde{\ts}$ of $\ts\in\mathfrak{C}_n$ is obtained from 
the antichain representative  by adding all remaining subfaces of $\ts$. Clearly, $\widetilde{\ts}=\ts$ if $\ts \in  \mathfrak{C}_n$ thus $\mathfrak{C}_n=\widetilde{\mathfrak{C}}_n$, we also have projections $\widehat{\Pi}:\mathfrak{P}_n\mapsto \widehat{\mathfrak{C}}_n$, $\widetilde{\Pi}:\mathfrak{P}_n\mapsto \widetilde{\mathfrak{C}}_n=\mathfrak{C}_n$, where $\widetilde{\Pi}=\Pi$.
Note that the cardinality of $\widehat{\mathfrak{C}}_n$, and therefore $\mathfrak{C}_n$ and $\widetilde{\mathfrak{C}}_n$, is given by the Dedekind number $M(n)$, c.f. \cite{Kisielewicz88}. For any $\ts\in \mathfrak{C}_n$, we call the elements of $\widehat{\ts}$, {\em top faces} or {\em facets} of $\ts$.

Recall from Section \ref{sec:introduction} that by {\em finite random complex} on $n$ vertices we understand a discrete probability space $\mathsf{K}=\bigl(\mathfrak{C}_n,\Pm_\mathsf{K}\bigr)$. 
It is easy to see that $\mathbb{P}_\mathsf{K}$ satisfies the following equivalent conditions\footnote{because events $\{\ts\in \mathfrak{C}_n\,|\,I\in \ts\}$ and $\{\tr\in \mathfrak{C}_n\,|\,\{I'\}\notin \tr\}$ are disjoint} (for $I'\subseteq I$)

\begin{itemize}
\item[{\bf (A)}] $\Pm_\mathsf{K}\bigl(\, I\ |\ (I')^c\,\bigr)=\Pm_\mathsf{K}\bigl(\{\ts\in \mathfrak{C}_n\,|\,I\in \ts\}\ |\ \{\tr\in \mathfrak{C}_n\,|\,\{I'\}\notin \tr\}\,\bigr)=0$,
\item[{\bf (B)}] $\Pm_\mathsf{K}\bigl(\, I'\ |\ I\,\bigr)=\Pm\bigl(\{\ts\in \mathfrak{C}_n\,|\,\{I'\}\in \ts\}\ |\ \{\tr\in \mathfrak{C}_n\,|\,I\in \tr\}\bigr)=1$.
\end{itemize}
In short {\bf (A)} says that if a subface $I'$ of $I$ has not occurred then $I$ cannot occur either; equivalently, {\bf (B)} says that $I'$ occurs whenever $I$ has occurred. 
We say that $\mathsf{K}$ is supported on a subcomplex, $\tk\in \mathfrak{C}_n$ if and only if for any $I\not\in \tk$ we have 
$\mathbb{P}_{\mathsf{K}}(\{I\})=0$.
 Given random complexes $\mathsf{K}$ and $\mathsf{L}$ on $n$-vertices, the joint probability space  $(\mathsf{K},\mathsf{L}):=(\mathfrak{C}_n\times \mathfrak{C}_n,\mathbb{P}_{\mathsf{K},\mathsf{L}})$ is a {\em random pair} if and only if  $\mathsf{L}$ is {\em almost surely a subcomplex} of $\mathsf{K}$, i.e. 
the following condition holds
\begin{itemize}
\item[$\textbf{(C)}$] for every $(\ts,\tr)\in \mathfrak{C}_n\times\mathfrak{C}_n$ such that $\tr\not\subseteq \ts$ we have $\mathbb{P}_{\mathsf{K},\mathsf{L}}(\,\ts, \tr\,)=0.$
\end{itemize}
For a given $\mathsf{K}$ (or $(\mathsf{K},\mathsf{L})$) it will be convenient to consider Bernoulli random variables which are 
{\em indicator functions of faces} in $\mathsf{K}$, i.e. for  $I\in f(n)$ we define
\begin{align}\label{eq:e_I}
 e_I:  \mathfrak{C}_n\longrightarrow \{0,1\},\qquad
 e_I(\ts)=\begin{cases}
 1,& \qquad I\in \ts,\\
 0, & \qquad\text{otherwise},
\end{cases}
\end{align}
For any $\ts\in \mathfrak{C}_n$, we set $e_{\ts}=\prod_{I\in \ts} e_I$ an {\em indicator function of the subcomplex $\ts$}. Clearly $e_{\ts}$ takes value $1$ on $\tr$ if and only if $\ts\subseteq \tr$.
 Let us define vectors $(p_\ts)$ and $(P_\ts)$ (directly related to vectors in \eqref{eq:p_s}, when the underlying random complex is $\mathcal{N}(\mathsf{A})$):
\begin{equation}\label{eq:p_s-P_s-e_I}
\begin{split}
p_\ts & =\Pm\Bigl(e_\ts=\prod_{I\in \ts} e_I =1\Bigr),\\
P_\ts & =\Pm\Bigl(\bigl(\prod_{I\in f(n), I\in \ts} e_I \prod_{J\in f(n), J\not\in \ts}  (1-e_J)\bigr)=1\Bigr).
\end{split} 
\end{equation}

Clearly, a random complex $\mathsf{K}$ is fully determined by indicator functions $\{e_I\}$ of faces and their joint distribution. In the next section $e_I$s will serve as formal indeterminates for functions defining  topological random variables on $\mathsf{K}$, such as the Euler characteristic. The main use of conditions {\bf (A)}({\bf (B)}) and {\bf (C)} is to  define (in Section \ref{sec:rnd-polynomials}) a natural polynomial ring for random topological invariants such as $\tChi(\mathsf{K})$. 

\begin{remark} 
{\rm In general, one could consider a more flexible model of a finite random complex with $(\mathfrak{P}_n,\bar{\Pm})$ as the underlying probability space. It can be thought of as a distribution on open faces of $\pmb\Delta_n$ (i.e. interiors of faces) with an exception of the zero dimension (the vertices.) In this model it is possible, for instance, for an edge to occur without its vertices (i.e. {\bf (A)} can be violated). }
\end{remark}

\begin{remark}
{\em 
 We may easily generalize the definition of the random complex $\mathsf{K}$ to the case $n=\infty$, and thus removing dependence on $n$ in the definition. This is done by considering 
 all labeled subcomplexes $\mathfrak{C}_\infty$ of the infinite simplex $\mathbf{\Delta}_\infty=\bigcup_n \mathbf{\Delta}_n$, and regarding a {\em random complex} $\mathsf{K}$ as a probability space $(\mathfrak{C}_\infty,\mathbb{P}_{\mathsf{K}})$. Such random complex is finite provided the support of $\mathsf{K}$ is contained in $\mathbf{\Delta}_n$ for sufficiently big $n$. 
}
\end{remark}

\subsection{Topological invariants in the random setting} Recall that, thanks to the Poincare-Euler formula \cite{Hatcher02}, the Euler characteristic of a general $n$-complex $K$ is  given by
\begin{equation}\label{eq:chi-abs}
 \chi(K)=\sum^n_{j=0} (-1)^{j}\,\text{\rm dim}\, C_j(K;\R),
\end{equation}
where $\text{\rm dim}\, C_j(K;\R)$ denotes the dimension, as a vector space, of the real coefficient $j$th chain group $C_j(K;\R)$, and equals (in the absolute case) to the number of $j$-dimensional faces $\pmb f_j(K)$ of $K$. We will also need a relative version of $\chi$. Given a pair $(K,L)$ where $L$ is a subcomplex of $K$ we have 
\begin{equation}\label{eq:chi-rel}
 \chi_{rel}(K,L)=\sum^n_{j=0} (-1)^{j}\,\text{\rm dim}\, C_j(K,L;\R),
\end{equation}
where $\text{\rm dim}\, C_j(K,L;\R)$ denotes the dimension of  the $j$th relative chain group $C_j(K,L;\R)= C_j(K;\R)/C_j(L;\R)$, as a real vector space (c.f. \cite{Hatcher02}). Note that 
\begin{equation}\label{eq:rk(X,Y)}
 \text{\rm dim}\, C_j(K,L;\R)=\pmb f_j(K)-\pmb f_j(L).
\end{equation}
Invariants $\chi=\chi(K)$ and $\chi_{rel}=\chi_{rel}(K,L)$ can be expressed in terms of Betti numbers $\{\beta_k(K)\}$, $\{\beta_k(K,L)\}$ of the chain complexes $C_\ast(K)$ and $C_\ast(K,L)$, (c.f. \cite{Hatcher02}). Specifically,
\begin{align}
\label{eq:chiX-betti} \chi & =\sum^n_{j=0} (-1)^{j}\,\beta_j(K),\qquad \chi_{rel} =\sum^n_{j=0} (-1)^{j}\,\beta_j(K,L).
\end{align}
\subsection{Random polynomials}\label{sec:rnd-polynomials} Given a random complex $\mathsf{K}$ let us treat the indicator functions of faces $\{e_I\}$ (or in a case of a random pair $\{e_I,w_J\}$) as formal indeterminates and consider a polynomial ring in $e_I$ (without loss of generality we work over $\R$):
\[
 \R[e_I]:=\R[e_{\{1\}},\ldots,e_{\{n\}},e_{\{1,2\}},\ldots,e_{\{i_1,\ldots,i_k\}},\ldots, e_{\{1,\ldots,n\}}],
\]
or $\R[e_I,w_J]$ in the case of random pairs. Observe that any random variable $\tX$ on $\mathsf{K}$ is given as such polynomial, i.e. 
\begin{equation}\label{eq:P}
 \tX=\sum_{\ts\in\mathfrak{C}_n} \tX_\ts \Bigl(\prod_{I\in f(n), I\in \ts} e_I \prod_{J\in f(n), J\not\in \ts}  (1-e_J)\Bigr),
\end{equation}
where $\tX_\ts$ is a value of $\tX$ at $\ts\in \mathfrak{C}_n$.
Based on \eqref{eq:chi-abs} 
 we may express the random Euler characteristic $\tChi=\tChi(\mathsf{K})$
\begin{equation}\label{eq:rnd-euler-char-def}
\begin{split}
 & \tChi:(\mathfrak{C}_n,\mathbb{P}_{\mathsf{K}})\longrightarrow \Z,\\
 & \tChi(\ts)=\chi(\ts),
\end{split}
\end{equation}
as the following polynomial in $\R[e_I]$: 
\begin{equation}\label{eq:rnd-euler-poly}
\begin{split}
   \tChi & =\sum_{I\in f(n)} (-1)^{|I|-1} e_I.
\end{split}
\end{equation}

\begin{lemma}\label{lem:R_I-ring}
Given a random complex $\mathsf{K}=(\mathfrak{C}_n,\Pm_{\mathsf{K}})$ and its collection of the indicator functions $\{e_I\}$, consider $\tQ, \tQ'\in \R[e_I]$ as two representatives of the same  coset in $\R[e_I]/\mathcal{I}$ where
$\mathcal{I}$ is an ideal generated by the following relations 
\begin{equation}\label{eq:relations-R_I}
 \{e_J\,e_I=e_J\ |\ \text{for all}\ I\subseteq J\},
\end{equation}
(in particular: $e^2_I=e_I$). Then $\tQ=\tQ'$ almost surely.
\end{lemma}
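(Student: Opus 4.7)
The plan is to show that every generator of $\mathcal{I}$ vanishes as a function on the sample space $\mathfrak{C}_n$, and then to extend this to the whole ideal by linearity over $\R[e_I]$; the almost sure equality of $\tQ(\mathsf{K})$ and $\tQ'(\mathsf{K})$ will follow from the fact that their difference lies in $\mathcal{I}$.

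First I would invoke the convention stated just before the preceding Remark that $\mathfrak{C}_n$ is identified with the chain representatives $\widetilde{\mathfrak{C}}_n$, so every outcome $\ts\in \mathfrak{C}_n$ is a genuine simplicial subcomplex of $\pmb\Delta_n$, closed under taking subfaces. Fix a generator of $\mathcal{I}$ of the form $e_J e_I - e_J$ with $I\subset J$, and any $\ts\in \mathfrak{C}_n$. If $\{J\}\notin \ts$ then $e_J(\ts)=0$ and both terms vanish; if $\{J\}\in \ts$ then $\{I\}\in \ts$ as well by closure under subfaces, whence $e_I(\ts)=e_J(\ts)=1$ and the generator again evaluates to zero. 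The specialization $I=J$ recovers $e_I^2=e_I$ pointwise, consistent with $e_I$ being Bernoulli.

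The second step is to lift this pointwise vanishing to the whole ideal. Since $\mathcal{I}$ is generated by these binomials, any element can be written as a finite sum $\sum_k R_k(e_{J_k}e_{I_k}-e_{J_k})$ with $R_k\in \R[e_I]$ and $I_k\subset J_k$, and therefore vanishes at every $\ts\in\mathfrak{C}_n$. Applying this to $\tQ-\tQ'\in \mathcal{I}$ gives $\tQ(\mathsf{K})(\ts)=\tQ'(\mathsf{K})(\ts)$ for every outcome $\ts$, which is in particular $\Pm_{\mathsf{K}}$-almost sure equality of the induced random variables.

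I do not anticipate a real obstacle; the qualifier ``almost surely'' in the statement is a hedge that becomes meaningful in the broader setup of Lemma \ref{lem:from-Pn-to-Cn}, where one allows the sample space to be the larger set $\mathfrak{P}_n$ under conditions \textbf{(A)} or \textbf{(B)}. In that variant the binomial $e_Je_I-e_J$ need not vanish pointwise, but condition \textbf{(A)} forces $\bar{\Pm}\bigl(\{\tr\,|\,\{J\}\in\tr\}\cap\{\tr\,|\,\{I\}\notin\tr\}\bigr)=0$ whenever $I\subset J$, so the pointwise argument above survives after discarding a $\bar{\Pm}$-null set.
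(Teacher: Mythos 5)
Your proof is correct, and it takes a mildly but genuinely different route from the paper's. The paper argues probabilistically: it decomposes $\Pm(e_J e_I=0)$ into the three joint events $\{e_J=0,e_I=1\}$, $\{e_J=1,e_I=0\}$, $\{e_J=0,e_I=0\}$, kills the middle one by condition \textbf{(A)}, and concludes $\Pm(e_Je_I=e_J)=1$ for each generator, leaving the passage from generators to the full ideal implicit (``it suffices to show\ldots''). You instead observe that, under the chain-representative convention, every outcome $\ts\in\mathfrak{C}_n$ is literally a subcomplex closed under subfaces, so each binomial $e_Je_I-e_J$ vanishes at \emph{every} $\ts$, and then you make the lift to the whole ideal explicit via the decomposition $\sum_k R_k(e_{J_k}e_{I_k}-e_{J_k})$ and the fact that evaluation at $\ts$ is a ring homomorphism. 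The two arguments rest on the same fact --- the paper's condition \textbf{(A)} is itself justified (in its footnote) by exactly the disjointness of events that your pointwise observation encodes --- but yours yields the slightly stronger conclusion that $\tQ(\mathsf{K})=\tQ'(\mathsf{K})$ everywhere on $\mathfrak{C}_n$, not merely almost surely, and your closing remark correctly identifies the $\mathfrak{P}_n$ setting of Lemma \ref{lem:from-Pn-to-Cn} as the place where the ``almost surely'' qualifier (and the probabilistic use of \textbf{(A)}) actually earns its keep.
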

\begin{proof}
 It suffices to show that $\Pm(e_J\,e_I=e_J)=1$ for any $I,J$ where $I\subseteq J$. We have
\begin{align*}
& \Pm(e_J\,e_I=0)=\Pm(e_J=0,e_I=1)+\Pm(e_J=1,e_I=0)+\Pm(e_J=0,e_I=0).
\end{align*}
Thanks to {\bf (A)} : $\Pm(e_J=1,e_I=0)=0$, thus 
\begin{align*}
& \Pm(e_J\,e_I=0)=\Pm(e_J=0,e_I=1)+\Pm(e_J=0,e_I=0)=\Pm(e_J=0),
\end{align*}
and $\Pm(e_J\,e_I=1)=1-\Pm(e_J\,e_I=0)=1-\Pm(e_J=0)=\Pm(e_J=1)$.
\end{proof}
\no We will further denote the quotient ring $\R[e_I]/\mathcal{I}$ by $\R_{\mathcal{I}}[e_I]$. Clearly, $\R_{\mathcal{I}}[e_I]$ has an additive  basis of monomials indexed by the chain representatives: $\ts\in\mathfrak{C}_n$:
\begin{equation}\label{eq:e_J}
 e_{\ts}=\prod_{I\in \ts} e_I.
\end{equation}
\no In the case of pairs $(\mathsf{K},\mathsf{L})$ we have a pair of sets of face indicator functions 
$\{e_I, w_J\}$ corresponding to $\mathsf{K}$ and $\mathsf{L}$
respectively. Then, it is relevant to consider a polynomial ring $\R[e_I,w_J]$ modulo relations 
in \eqref{eq:relations-R_I} and additionally (thanks to property {\bf (C)}):
\begin{equation}\label{eq:relations-R_I-pair}
\begin{split}
 & \{w_J\,w_I=w_J\ |\ \text{for all}\ I\subseteq J\},\\
 & \{ w_I=w_I e_J,\ |\ \text{for all}\ J\subseteq I\}.
\end{split}
\end{equation}
The resulting quotient ring will be denoted by $\R_{\mathcal I}[e_I,w_J]$, and the analogous statement as Lemma \ref{lem:R_I-ring} is true for random variables expressed as representatives  in $\R_{\mathcal I}[e_I,w_J]$. An important for us example of a polynomial in $\R_{\mathcal I}[e_I,w_J]$ is the relative Euler characteristic 
\begin{equation}\label{eq:rnd-euler-char-rel-def}
\begin{split}
 & \tChi_{rel}(\mathsf{K},\mathsf{L}):(\mathfrak{C}_n\times\mathfrak{C}_n,\mathbb{P}_{\mathsf{K}})\longrightarrow \Z,\\
 & \tChi_{rel}(\ts,\ts')=\chi_{rel}(\ts,\ts'),\ \text{if}\ \ts'\subseteq \ts, \quad\text{i.e. the relative Euler characteristic of $(\ts,\ts')$}\\
 & \qquad\quad =0,\ \text{if}\ \ts'\not\subseteq \ts.
\end{split}
\end{equation}
Note, that thanks to {\bf (C)}, the set of pairs $(\ts,\ts')$ such that $\ts'\not\subseteq \ts$ is of measure zero in $(\mathsf{K},\mathsf{L})$ and thus the value 
of $\tChi_{rel}=\tChi_{rel}(\mathsf{K},\mathsf{L})$ on such pairs is irrelevant. Thanks to \eqref{eq:rk(X,Y)}, the polynomial expression for 
$\tChi_{rel}$ is given as  follows 
\begin{equation}\label{eq:rnd-rel-euler-poly}
\begin{split}
   \tChi_{rel} & =\sum_{I\in f(n)} (-1)^{|I|-1} (e_I-w_I).
\end{split}
\end{equation}

\section{Moments and  distributions of the random Euler characteristic.}\label{sec:moments-dist}

We begin with basic review of the {\em method of moments} for the finite range discrete random variable $\tX$, and provide a specific formulation based on the recent work in~\cite{El-Mikkawy03}. Alternatively, one could use factorial moments (see e.g. \cite[p. 17]{Bollobas-book85}), however they do not offer any advantage in the setting of the random Euler characteristic. 

\subsection{Method of moments}\label{sec:moments-method} First, we need basic information on the {\it Vandermonde matrix} $\mathcal{V}$ (c.f. \cite{Meyer00-book}). Given a fixed sequence of real numbers $\mathbf{x}=\{x_0,x_1,\ldots,x_N\}$, $\mathcal{V}$ is an $(N+1)\times (N+1)$ matrix explicitly given as follows
\[
\mathcal{V}=\mathcal{V}(\mathbf{x}) = \left(\begin{array}{cccc} 1 & x_0 & \cdots & x^{N}_0\\
1 & x_1 & \cdots & x^{N}_1\\
\vdots & \vdots & \ddots & \vdots\\
1 & x_N & \cdots & x_N^{N}\end{array}
\right).
\]
\no Note that $\mathcal{V}$ is invertible provided the $x_i$'s are distinct (c.f. \cite{Meyer00-book}). A closed form of $\mathcal{V}^{-1}$ has been derived in \cite{El-Mikkawy03} in terms of the elementary symmetric polynomials. Denote by   
 $\mathfrak{e}_i(j)(\mathbf{x})$ the $i$th--elementary symmetric polynomial in variables: $x_0,\cdots, \widehat{x}_j,\cdots,x_N$ for $j = 0, \cdots, N$, where $\widehat{x}_j$ means that $x_j$ is omitted. Specifically
\begin{eqnarray}\label{eq:sigma-ij}
   \mathfrak{e}_i(j)(\mathbf{x}) = \left\{
     \begin{array}{cl}
       1 & \mbox{if} \  i = 0\\
      \displaystyle{ \sum_{1\leq l_1<l_2<\ldots<l_{i}\leq N; l_k\neq j} x_{l_1}x_{l_2}\ldots x_{l_{i}}} & \mbox{if} \  i > 0\ .
     \end{array}
   \right.
\end{eqnarray}
\no By \cite[p. 647]{El-Mikkawy03}, we have
\begin{equation}
\label{eq:v-x} \mathcal{V}(\mathbf{x})^{-1} =(v_{k i}(\mathbf{x})),\qquad\text{where}\quad
 v_{k i}(\mathbf{x}) = (-1)^{N+k}\frac{\mathfrak{e}_{N-k}(i)(\mathbf{x})}{\prod_{j=0, j\ne i}^N (x_i - x_j)},
\end{equation}
for $i=0,\ldots,N$, $k=0,\ldots,N$. In the case $\mathbf{x}$ is an integer interval $[\underline{m},\ldots,\overline{m}]$, $\underline{m},\overline{m}\in\Z$, $\underline{m}\leq \overline{m}$ of  size $N=\overline{m}-\underline{m}$ we obtain 
\begin{equation}\label{eq:v-0-n}  
v_{k i}(\mathbf{x})=v_{k i}(\underline{m},\overline{m})  = \frac{(-1)^{i+k}}{N!}\,{N \choose i}\,\mathfrak{e}_{N-k}(i)(\underline{m},\ldots,\overline{m}).
\end{equation}
\begin{lemma}\label{lem:X-dist}
Let $\mathtt{X}$ be a discrete random variable of a finite range $\mathbf{x}=\{x_0,x_1,\ldots, x_N\}$, and let $\mu_k=\mathbb{E}(\mathtt{X}^k)$ denote the $k$-th moment of $\mathtt{X}$. Given the vector $\boldsymbol{\mu}=(\mu_0,\ldots, \mu_{N})$ we can recover the distribution of $\mathtt{X}$ explicitly as follows
\begin{align}\label{eq:pk-x}
p_i=\Pm(\mathtt{X}=x_i) &
      =\sum_{k=0}^N v_{k i}\,\mu_{k},\qquad i=0,\ldots,N,
\end{align}
where $v_{k i}=v_{k i}(\mathbf{x})$ are the Vandermonde coefficients. 
\end{lemma}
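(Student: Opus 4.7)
The plan is to reduce the statement to a single linear algebra identity involving the Vandermonde matrix $\mathcal{V}(\mathbf{x})$, and then invoke the closed form for $\mathcal{V}(\mathbf{x})^{-1}$ already recalled in \eqref{eq:v-x}. The core observation is that the map sending the probability vector $\mathbf{p}=(p_0,\ldots,p_N)^{T}$ (with $p_i=\Pm(\tX=x_i)$) to the moment vector $\boldsymbol{\mu}=(\mu_0,\ldots,\mu_N)^{T}$ is linear and given by the transpose of $\mathcal{V}(\mathbf{x})$. Once this is observed, invertibility of $\mathcal{V}(\mathbf{x})$ (which holds because the $x_i$ are, by assumption, distinct real numbers) yields the formula in \eqref{eq:pk-x}.

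Concretely, first I would unwind the definition of the $k$-th moment in the discrete setting:
\[
\mu_k \;=\; \mathbb{E}(\tX^{k}) \;=\; \sum_{i=0}^{N} x_i^{\,k}\, p_i \;=\; \sum_{i=0}^{N} \mathcal{V}(\mathbf{x})_{ik}\,p_i,
\]
so that $\boldsymbol{\mu}= \mathcal{V}(\mathbf{x})^{T}\mathbf{p}$. Next, since the entries of $\mathbf{x}$ are pairwise distinct, $\mathcal{V}(\mathbf{x})$ is invertible, and solving the linear system gives $\mathbf{p} = \bigl(\mathcal{V}(\mathbf{x})^{-1}\bigr)^{T}\boldsymbol{\mu}$. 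Writing this coordinate-wise and using the notation $v_{k i}(\mathbf{x}) = \bigl(\mathcal{V}(\mathbf{x})^{-1}\bigr)_{ki}$ from \eqref{eq:v-x} then produces exactly
\[
p_i \;=\; \sum_{k=0}^{N} v_{k i}(\mathbf{x})\,\mu_k,
\]
which is \eqref{eq:pk-x}.

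The only subtlety, and really the only place where any work happens, is justifying that the explicit expression for $v_{k i}(\mathbf{x})$ that appears in the lemma's statement is indeed the $(k,i)$ entry of $\mathcal{V}(\mathbf{x})^{-1}$; but this is precisely what is recalled from \cite{El-Mikkawy03} in equation \eqref{eq:v-x}, so no independent verification is needed here. I do not anticipate a real obstacle: the argument is a direct application of the invertibility of a Vandermonde matrix with distinct nodes, plus the already-cited closed form for its inverse. The only thing worth double-checking is the convention (row vs.\ column indexing, and whether one inverts $\mathcal{V}$ or $\mathcal{V}^{T}$), so that the indices $k$ and $i$ in $v_{ki}$ appear in the same order as in \eqref{eq:pk-x}; this is a bookkeeping matter rather than a mathematical difficulty.
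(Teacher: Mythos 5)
Your proposal is correct and follows essentially the same route as the paper: write the moments as the linear system $\mu_k=\sum_i x_i^k p_i$, note that distinctness of the $x_i$ makes the Vandermonde matrix invertible, and read off $p_i=\sum_k v_{ki}\mu_k$ from the cited closed form \eqref{eq:v-x} for $\mathcal{V}(\mathbf{x})^{-1}$. The only cosmetic difference is that you phrase the system with column vectors and transposes while the paper writes $\mathbf{p}\mathcal{V}=\boldsymbol{\mu}$ with row vectors; the bookkeeping of indices works out identically.
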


\begin{proof} By definition we have a linear system of $N$ equations
\[
\mu_k = \sum_{i=0}^N x_i^k p_i,\qquad \mbox{for}\quad k=0,1,\ldots,N.
\]
In matrix form this system reads: $\mathbf{p}\mathcal{V} = {\boldsymbol{\mu}}$ where $\mathbf{p}=(p_0,\ldots,p_N)$, and $\boldsymbol{\mu}=(\mu_0,\ldots,\mu_{N})$. Since all $x_i$'s are distinct $\det(\mathcal{V})=\prod_{i\neq j} (x_i-x_j)\neq 0$. Thus $\mathcal{V}$ is invertible and we have the unique solution $\mathbf{p}=\boldsymbol{\mu}\mathcal{V}^{-1}$.  Identity \eqref{eq:pk-x} is now a direct consequence of \eqref{eq:v-x}.
\end{proof}
\no Our goal for the next subsection is to provide expressions for distributions of polynomial random variables in $\R_{\mathcal{I}}[e_I]$. 
\subsection{Distributions of random polynomials.}\label{sec:poly-dist} Since the differences between  $\R_{\mathcal{I}}[e_I]$ and $\R_{\mathcal{I}}[e_I,w_J]$ are mostly notational, we choose to work with the former.
Recall from Section \ref{sec:rnd-polynomials} that any representative polynomial in $\R[e_I]$ is a linear combination of monomials $e_{\tk}$ from \eqref{eq:e_J}
\begin{equation}\label{eq:Q-expn}
\tQ=\sum_{\tk\in \mathfrak{P}_n} c_{\tk}\, e_{\tk},\qquad c_{\tk}\in\R,
\end{equation}
 where the constant coefficient $c_0=c_{\text{\rm \O}}$ is indexed by the empty set. Note that if $\tQ\in \R_{\mathcal{I}}[e_I]$ then, thanks to the relations in $\R_{\mathcal{I}}[e_I]$, we may always pick expansions of $\tQ$ in terms of the antichain or chain representatives i.e. 
\begin{equation}\label{eq:Q-expn-min-max}
\tQ=\sum_{\widehat{\ts}\in \widehat{\mathfrak{C}}_n} c_{\widehat{\ts}}\,e_{\widehat{\ts}},\qquad \text{or}\qquad  \tQ=\sum_{\widetilde{\ts}\in \widetilde{\mathfrak{C}}_n} c_{\widetilde{\ts}}\,e_{\widetilde{\ts}}=\sum_{\ts\in \mathfrak{C}_n} c_{\ts}\,e_{\ts},
\end{equation}
where in the second expansion we just applied our convention from Section \ref{sec:rand-compx} to identify elements of $\mathfrak{C}_n$ with their chain representatives. We refer to 
\ref{eq:Q-expn-min-max}(left) as the {\em antichain representative} and  \ref{eq:Q-expn-min-max}(right) as the {\em chain representative} of $\tQ$ in $\R_{\mathcal{I}}[e_I]$.
Note that from  Lemma \ref{lem:R_I-ring} it is irrelevant which expansion of $\tQ$ we choose.
 Below, we outline a strategy to determine coefficients $c_{\tk}$ of \eqref{eq:Q-expn} via the inclusion--exclusion principle.

Recall, the general form of the {\em inclusion--exclusion principle}, \cite{Lovasz93}: Given a finite set $F$ and functions $f,g:2^F\longrightarrow \R$, 
\begin{equation}\label{eq:incl-excl-g-to-f}
 g(S')=\sum_{S:S\subseteq S'} f(S),\qquad S'\subseteq F,
\end{equation}
we have 
\begin{equation}\label{eq:incl-excl-f-to-g}
 f(S')=\sum_{S:S\subseteq S'} (-1)^{|S'|-|S|} g(S),\qquad S'\subseteq F. 
\end{equation}

\no Recall the following notation: given $\tQ\in \R[e_I]$ and $\ts\in \mathfrak{P}_n$ define
\begin{equation}\label{eq:Q(s)}
 \tQ(\ts):=\tQ(\{e_{I}=1\ |\ I\in \ts\}).
\end{equation}
I.e. $\tQ(\ts)$ is a polynomial obtained from $\tQ$ by substituting $e_{I}=1$ for all $I\in \ts$, and $\tQ(\ts)(0)$ its constant coefficient.

\begin{lemma}\label{lem:c_J-eval}
 Consider any representative $\tQ\in \R_{\mathcal{I}}[e_I]$ in a general form
\eqref{eq:Q-expn}. For any  $\tk\in \mathfrak{P}_n$ the coefficient 
$c_\tk$ of $\tQ$ in the expansion \eqref{eq:Q-expn} is given as follows
\begin{equation}\label{eq:C_J-eval}
 c_{\tk}(\tQ)=\sum_{\tr\in \mathfrak{P}_n,\tr\subseteq \tk} (-1)^{|\tk|-|\tr|}\, \tQ(\tr)(0).
\end{equation}
 In the case $\tQ$ is represented by the chain expansion (right)\eqref{eq:Q-expn-min-max}, for any $\ts\in \mathfrak{C}_n$, $\ts\neq\{\text{\rm \O}\}$ we have
\begin{equation}\label{eq:c_J-eval-Cn}
 c_{\ts}(\tQ)=\sum_{\tr\in \mathfrak{C}_n,\tr\subseteq \ts} (-1)^{|\ts|-|\tr|}\, (\tQ(\tr)(0)-c_0),
\end{equation}
where $c_0=c_{\text{\rm \O}}=\tQ(0)$ is the constant term of $\tQ$. 
\end{lemma}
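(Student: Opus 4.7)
The plan is to derive both identities from the classical inclusion-exclusion formula~\eqref{eq:incl-excl-g-to-f}--\eqref{eq:incl-excl-f-to-g}, applied first on the Boolean lattice $\mathfrak{P}_n$ and then on the restricted sub-poset $(\mathfrak{C}_n,\subset)$.

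For~\eqref{eq:C_J-eval}, I would first evaluate $\tQ(\tr)(0)$ directly from the monomial expansion~\eqref{eq:Q-expn}. Under the substitution $e_I=1$ for $I\in\tr$ and $e_J=0$ for $J\notin\tr$, each monomial $e_{\tk}=\prod_{I\in\tk}e_I$ is a product of $0/1$-values and returns $1$ precisely when every $I\in\tk$ lies in $\tr$; equivalently $e_{\tk}(\tr)(0)=\mathbf{1}_{\tk\subset\tr}$. Summing over the expansion therefore gives
\[
\tQ(\tr)(0)=\sum_{\tk\in\mathfrak{P}_n,\,\tk\subset\tr}c_{\tk}.
\]
This is exactly~\eqref{eq:incl-excl-g-to-f} with $g(\tr)=\tQ(\tr)(0)$ and $f(\tr)=c_{\tr}$, so~\eqref{eq:incl-excl-f-to-g} on the Boolean lattice $2^{\tk}$ delivers~\eqref{eq:C_J-eval} at once.

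For~\eqref{eq:c_J-eval-Cn}, the same substitution calculation applies, but now the relevant basis is indexed by chains: the relations~\eqref{eq:relations-R_I} of $\R_{\mathcal{I}}[e_I]$ collapse $e_{\ts}=\prod_{I\in\ts}e_I$ to $\prod_{I\in\widehat{\ts}}e_I$, so $e_{\ts}$ evaluates to $1$ on $\tr$ iff every top face of $\ts$ lies in $\tr$; because $\tr$ is itself a subcomplex this is equivalent to $\ts\subset\tr$. Hence
\[
\tQ(\tr)(0)=\sum_{\ts\in\mathfrak{C}_n,\,\ts\subset\tr}c_{\ts},
\]
with the empty chain contributing $c_0$ to every right-hand side. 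Subtracting $c_0$ isolates the non-constant contributions, and a Möbius inversion over the interval $[\{\text{\rm \O}\},\ts]$ of $(\mathfrak{C}_n,\subset)$ then produces~\eqref{eq:c_J-eval-Cn}.

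The main obstacle I foresee is precisely this final inversion. The signed kernel $(-1)^{|\ts|-|\tr|}$ appearing in~\eqref{eq:c_J-eval-Cn} is the Möbius function of the ambient Boolean lattice $\mathfrak{P}_n$, not a priori the Möbius function of the sub-poset $(\mathfrak{C}_n,\subset)$. Justifying that this kernel still inverts the relation above after the $c_0$-subtraction will require leveraging the fact that every subcomplex of $\ts$ is an order-ideal of $\ts$ and that the non-chain subsets of $\ts$ either cancel pairwise or are absorbed into the $c_0$-term; carrying out this combinatorial bookkeeping---together with accounting for why the degenerate case $\ts=\{\text{\rm \O}\}$ must be excluded from the statement---is where the real content of the argument lies.
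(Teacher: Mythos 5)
Your treatment of \eqref{eq:C_J-eval} is correct and coincides with the paper's argument: from the expansion \eqref{eq:Q-expn} one gets $\tQ(\tr)(0)=\sum_{\tk\subset\tr}c_{\tk}$, and inversion on the Boolean lattice of subsets of $\tk$ gives the formula. The problem is the second half. You set up the relation $\tQ(\tr)(0)=\sum_{\ts\subset\tr}c_{\ts}$ over the poset of subcomplexes and then defer the decisive step --- checking that the Boolean kernel $(-1)^{|\ts|-|\tr|}$ still inverts this relation on the sub-poset $(\mathfrak{C}_n,\subset)$ --- to unspecified ``combinatorial bookkeeping.'' That step is not a formality: it fails, because the M\"obius function of the poset of subcomplexes of $\ts$ is not $(-1)^{|\ts|-|\tr|}$, the non-chain subsets of $\ts$ do not cancel, and \eqref{eq:c_J-eval-Cn} is false as stated. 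Concretely, take $n=2$ and the chain representative $\tQ=e_{\{1\}}+e_{\{2\}}-e_{\{1\}}e_{\{2\}}e_{\{1,2\}}$ (the chain form of $\tChi$, so $c_0=0$), and let $\ts$ be the full complex, $|\ts|=3$, $c_{\ts}=-1$. The subcomplexes $\tr\subset\ts$ are $\text{\rm \O}$, $\{\{1\}\}$, $\{\{2\}\}$, $\{\{1\},\{2\}\}$ and $\ts$, with $\tQ(\tr)(0)$ equal to $0,1,1,2,1$ respectively, so the right-hand side of \eqref{eq:c_J-eval-Cn} is $0+1+1-2+1=1\neq -1$. By contrast \eqref{eq:C_J-eval}, summed over all eight subsets of $\ts$ in $\mathfrak{P}_n$, does return $-1$: the three subsets that are not closed under passing to subfaces, namely $\{\{1,2\}\}$, $\{\{1\},\{1,2\}\}$ and $\{\{2\},\{1,2\}\}$, contribute $0-1-1=-2$, which is exactly what the restricted sum is missing.

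So your instinct about where the difficulty lies is precisely right, and you should have trusted it further: the paper's own proof disposes of the extra terms by asserting that $\bar{\tQ}(\tr)(0)=0$ whenever $\tr$ is not a chain representative, and the same example (with $\tr=\{\{1\},\{1,2\}\}$ one gets $\bar{\tQ}(\tr)(0)=1$) shows that assertion is false. A correct version of the second identity must either keep the sum over all of $\mathfrak{P}_n$ as in \eqref{eq:C_J-eval}, replace $(-1)^{|\ts|-|\tr|}$ by the actual M\"obius function of the subcomplex poset (which is $0$ on many intervals), or pass to the antichain representative and invert over subsets of $\widehat{\ts}$, where the lattice genuinely is Boolean. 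The last route is in effect what the multinomial derivation of \eqref{eq:c-euler-simple} in Section \ref{sec:chi-dist} carries out, which is why the explicit double-sum coefficients there, and the worked example of Section \ref{sec:stevens-chi}, come out correctly even though \eqref{eq:c_J-eval-Cn} itself does not.
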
 
\begin{proof}
In the inclusion--exclusion principle set $F=\tk$. Then any subset $S\subseteq F$ is just a subset of faces $\tr$ of $\tk$, i.e. $\tr\in\mathfrak{P}_n$ and $\tr\subseteq \tk$. Directly from  \eqref{eq:Q-expn} and \eqref{eq:Q(s)} for any $\tr\subseteq \tk$,  we have
\[
 \tQ(\tr)(0)=\sum_{\tr'\subseteq \tr} c_{\tr'}
\]
thus setting $g(\tr)=\tQ(\tr)(0)$ and $f(\tr)=c_{\tr}$, Equation \eqref{eq:C_J-eval} follows from \eqref{eq:incl-excl-f-to-g}.
To obtain \eqref{eq:c_J-eval-Cn} consider the polynomial $\bar{\tQ}=\tQ-c_0$. If $\tr\subseteq \tk$ and $\tr\neq\widehat{\tr}$,  then  $\bar{\tQ}(\tr)(0)=0$. Therefore,  for $\ts\in\mathfrak{C}_n$, Equation \eqref{eq:C_J-eval} yields
\[
 c_{\ts}(\bar{\tQ})=\sum_{\tr\in \mathfrak{P}_n,\tr\subseteq \ts} (-1)^{|\ts|-|\tr|}\, \bar{\tQ}(\tr)(0)=\sum_{\tr\in \mathfrak{C}_n,\tr\subseteq \ts} (-1)^{|\ts|-|\tr|}\, \bar{\tQ}(\tr)(0).
\]
Because $c_{\ts}(\tQ)=c_{\ts}(\bar{\tQ})$ for $\ts\neq \text{\rm \O}$, the identity in \eqref{eq:c_J-eval-Cn} follows.
\end{proof}
\no For a polynomial random variable $\tQ\in \R[e_I]$ in a general form \eqref{eq:Q-expn}, define constants
\begin{align}\label{eq:Q-range-bounds}
&  \underline{m}(\tQ)  =\sum_{\ts\in \mathfrak{P}_n} c^-_{\ts},\quad c^-_{\ts}=\min\{c_{\ts},0\},\qquad
 \overline{m}(\tQ)  =\sum_{\ts\in \mathfrak{P}_n} c^+_{\ts},\quad c^+_{\ts}=\max\{c_{\ts},0\}.
\end{align}
Denote the coefficients of the general expansion \eqref{eq:Q-expn} of the chain representative of the $k$-th power $(\tQ)^k$ by $c_{\ts,k}(\tQ)$, i.e. 
\begin{equation}\label{eq:Q^k}
 \tQ^k=\sum_{\ts\in \mathfrak{C}_n} c_{\ts,k}(\tQ)\, e_\ts\ .
\end{equation}

\no We summarize efforts of this section by stating the following result which is a direct consequence of Lemma \ref{lem:X-dist} and Lemma \ref{lem:c_J-eval}.

\begin{theorem}\label{thm:Q-dist}
Given  $\tQ$ as a chain representative in $\R_{\mathcal{I}}[e_I]$, 
suppose that the set of realizations of $\tQ$ is in the integer interval $[\underline{m}, \overline{m}]$. Then the distribution of $\tQ$ and  its moments are 
 given as follows 
\begin{equation}\label{eq:Q-dist-again}
\begin{split}
\mu_k =\mathbb{E}(\tQ^k)  & = \sum_{\ts\in \mathfrak{C}_n}(\tQ(\ts)(0))^k P_\ts=\sum_{\ts\in \mathfrak{C}_n} c_{\ts,k}(\tQ) p_\ts,\\
\Pm(\tQ=\underline{m}+j) & =\sum_{\ts\in \mathfrak{C}_n; \tQ(s)(0)=\underline{m}+j} P_{\ts}=\sum_{\ts\in \mathfrak{C}_n} a_{\ts,j}(\tQ)\,p_{\ts},\qquad j\in [0,N], \quad N=\overline{m}-\underline{m}\\
 &\qquad \text{for} \qquad a_{\ts,j}(\tQ)=\sum_{k=0}^{N}v_{k j}(\tQ)\,c_{\ts,k}(\tQ),
\end{split}
\end{equation} 
where $v_{k j}(\tQ)$  were defined in \eqref{eq:v-x}. Further, $c_0=\tQ(0)$ and $c_{0,k}=c^k_0$, and for $\ts\neq \text{\rm \O}$:
\begin{equation}\label{eq:c_J,k}
c_{\ts,k}(\tQ)=\sum_{\tr\in \mathfrak{C}_n;\tr\subseteq \ts} (-1)^{|\ts|-|\tr|} (\tQ(\tr)(0)-c_0)^k.
\end{equation}
\end{theorem}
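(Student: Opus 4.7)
The plan is to assemble the theorem from three independent ingredients that have already been set up earlier: the evaluation of polynomial random variables on atoms of $\mathsf{K}$, the expansion of $\tQ^k$ in the chain basis of $\R_{\mathcal{I}}[e_I]$, and the Vandermonde inversion of Lemma \ref{lem:X-dist}.

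First I would establish the moment identities. Whenever the random complex $\mathsf{K}$ realizes an outcome $\ts\in\mathfrak{C}_n$, every indicator $e_I$ takes the value $1$ if $\{I\}\in\ts$ and $0$ otherwise, so the polynomial $\tQ$ evaluated on this outcome is exactly $\tQ(\ts)(0)$. Averaging $(\tQ(\ts)(0))^k$ over the atoms with weights $P_\ts$ yields the first equality $\mu_k=\sum_\ts (\tQ(\ts)(0))^k P_\ts$. For the second equality, I would use the chain expansion $\tQ^k=\sum_{\ts\in\mathfrak{C}_n} c_{\ts,k}(\tQ)\,e_\ts$ from \eqref{eq:Q^k} and take expectations term by term, invoking $E(e_\ts)=\Pm(e_\ts=1)=p_\ts$ from \eqref{eq:p_s-P_s-e_I}. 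Implicitly this uses Lemma \ref{lem:R_I-ring}, which guarantees that passage to the quotient ring $\R_{\mathcal{I}}[e_I]$ preserves almost-sure equalities, so the chain expansion is legitimate as a representative of the random variable $\tQ^k(\mathsf{K})$.

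Next I would derive the distribution formulas. The identity $\Pm(\tQ=\underline{m}+j)=\sum_{\ts;\tQ(\ts)(0)=\underline{m}+j} P_\ts$ is again immediate from the evaluation observation above, since the event $\{\tQ=\underline{m}+j\}$ is the disjoint union of the atoms $\ts$ whose evaluation equals $\underline{m}+j$. To reach the right-hand expression $\sum_\ts a_{\ts,j}(\tQ)\,p_\ts$, apply Lemma \ref{lem:X-dist} to $\tQ$ with range $[\underline{m},\overline{m}]$ and $N=\overline{m}-\underline{m}$; this produces $\Pm(\tQ=\underline{m}+j)=\sum_{k=0}^N v_{k j}(\tQ)\,\mu_k$, and then substituting the second form of $\mu_k$ and exchanging the order of summation gives the required $a_{\ts,j}(\tQ)=\sum_k v_{k j}(\tQ)\,c_{\ts,k}(\tQ)$.

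Finally I would verify the closed form \eqref{eq:c_J,k} for the coefficients $c_{\ts,k}(\tQ)$. Since $\tQ^k$ lies in $\R_{\mathcal{I}}[e_I]$ and $c_{\ts,k}(\tQ)$ is by definition its coefficient on $e_\ts$ in the chain expansion, apply the chain-representative form of Lemma \ref{lem:c_J-eval}, namely \eqref{eq:c_J-eval-Cn}, to the polynomial $\tQ^k$. The two facts to combine here are that the constant term of $\tQ^k$ is $c_0^k$, and that substitution commutes with taking powers, so $\tQ^k(\tr)(0)=(\tQ(\tr)(0))^k$. Plugging these into \eqref{eq:c_J-eval-Cn} for $\ts\neq\text{\rm \O}$ produces the displayed formula.

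The main subtlety is not combinatorial but bookkeeping: one must stay consistently in the chain representation and rely on the quotient relations $e_J e_I=e_J$ for $I\subset J$ (Lemma \ref{lem:R_I-ring}) so that $\tQ^k$ still expands cleanly in $\{e_\ts\}_{\ts\in\mathfrak{C}_n}$ and the evaluation map $\tQ\mapsto \tQ(\ts)(0)$ remains a ring homomorphism. Once this is in hand, the theorem is essentially a coordinated application of Lemma \ref{lem:X-dist} (for the Vandermonde inversion) and Lemma \ref{lem:c_J-eval} (for inclusion–exclusion of the coefficients), with no new combinatorial content to establish.
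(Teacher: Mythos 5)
Your proposal is correct and follows essentially the same route as the paper: take expectations of the chain expansion of $\tQ^k$ using $\mathbb{E}(e_\ts)=p_\ts$, invert via Lemma \ref{lem:X-dist}, and obtain the coefficients by applying \eqref{eq:c_J-eval-Cn} to $\tQ^k$. One remark, which applies equally to the paper's own proof: applying \eqref{eq:c_J-eval-Cn} to $\tQ^k$ literally yields $\sum_{\tr\subset\ts}(-1)^{|\ts|-|\tr|}\bigl((\tQ(\tr)(0))^k-c_0^k\bigr)$, which agrees with the displayed form $\sum_{\tr\subset\ts}(-1)^{|\ts|-|\tr|}(\tQ(\tr)(0)-c_0)^k$ only when $c_0=0$ (as is the case for $\tChi$, $\pmb f_d$ and $\tChi_{rel}$, so nothing downstream is affected), so your claim that the substitution ``produces the displayed formula'' inherits the same small imprecision.
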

\begin{proof}
 Since $e_{\ts}$ are Bernoulli random variables 
\[
 \mu_k=\mathbb{E}(\tQ^k)=\sum_{\ts\in \mathfrak{C}_n} c_{\ts,k}(\tQ)\, \mathbb{E}(e_{\ts})=\sum_{\ts\in \mathfrak{P}_n} c_{\ts,k}(\tQ)\, p_{\ts},
\]
thus \eqref{eq:Q-dist-again} is an immediate consequence of \eqref{eq:pk-x}.  Formula \eqref{eq:c_J,k} follows from \eqref{eq:c_J-eval-Cn} applied to $\tQ^k$.
\end{proof}
\subsection{Formulas for \texorpdfstring{$\tChi(\mathsf{K})$}{chi}, \texorpdfstring{$\pmb f_d(\mathsf{K})$}{fd} and \texorpdfstring{$\tChi_{rel}(\mathsf{K},\mathsf{L})$}{chi-rel}}\label{sec:chi-dist}
In this section we aim to provide slightly more tractable formulas for the coefficients $c_{\ts,k}(\,\cdot\,)$ and the integer ranges $[\underline{m}(\,\cdot\,), \overline{m}(\,\cdot\,)]$ for the polynomials $\tChi=\tChi(\mathsf{K})$, $\pmb f_d=\pmb f_d(\mathsf{K})$ and $\tChi_{rel}=\tChi_{rel}(\mathsf{K},\mathsf{L})$, where $\mathsf{K}$ is a given random complex on $n$ vertices.   Thanks to Theorem \ref{thm:Q-dist}, it will provide us with a more precise characterization of distributions for these polynomials. 

We begin with the  case of $\pmb f_d(\mathsf{K})$. 
Clearly, the range of $\pmb f_d$ is contained in between 
\begin{equation}\label{eq:range-F_d}
  \underline{m}(\pmb f_d)=0,\qquad\text{and}\qquad
  \overline{m}(\pmb f_d)={n \choose d+1}.
\end{equation}
\no For a subcomplex $\ts\in \mathfrak{C}_n$ and its corresponding antichain $\widehat{\ts}$, recall the following notation 
\begin{equation}\label{eq:faces-notation}
\begin{split}
 r^+_{top}=r^+_{top}(\ts) & =\{\text{numer of even dimensional faces in $\widehat{\ts}$}\},\\
 r^-_{top}=r^-_{top}(\ts) & =\{\text{numer of odd dimensional faces in $\widehat{\ts}$}\},\\
 r^+_{low}=r^+_{low}(\ts) & =\{\text{numer of even dimensional faces in $\ts-\widehat{\ts}$}\},\\
 r^-_{low}=r^-_{low}(\ts) & =\{\text{numer of odd dimensional faces in $\ts-\widehat{\ts}$}\},\\
r_{top}=r_{top}(\ts) & = r^+_{top}+r^-_{top}=|\widehat{\ts}|,\\
r_{low}=r_{low}(\ts) & = |\ts|-|\widehat{\ts}|,\qquad r=r(\ts)  = r_{top}+r_{low}=|\ts|.
\end{split}
\end{equation}

 Given a random complex $\mathsf{K}$, a basic example of interest is the number of its $d$--dimensional faces
\begin{equation}\label{eq:rnd-num-faces-def}
 \pmb f_d=\sum_{\{I\}\in \mathfrak{C}_n; |I|=d+1} e_I,
\end{equation}
\no and the {\em Euler characteristic of $\mathsf{K}$}.
By the Euler--Poincare formula (see Equation \eqref{eq:chi-abs}, c.f. \cite{Hatcher02}) we have the following relation between \eqref{eq:rnd-num-faces-def} and \eqref{eq:rnd-euler-char-def}
\begin{equation}\label{eq:rnd-euler-char-def-f_d}
 \tChi=\sum^{n-1}_{d=0} (-1)^{d} \pmb f_d.
\end{equation}
Moreover,
\[
 \tChi(\ts)(0)=\chi(\ts)=r^+(\ts)-r^-(\ts).
\]
\begin{proposition}
We have the following formulas for the coefficients of $\pmb f_d$ and $\tChi$: 
\begin{equation}\label{eq:c-F-simple}
c_{\ts,k}(\pmb f_d)=\sum^{r_{top}(\ts)}_{i=1} (-1)^{r_{top}(\ts)-i} {r_{top}(\ts) \choose i} i^k,
\end{equation}
\begin{equation}\label{eq:c-euler-simple}
\begin{split}
c_{\ts,k}(\mathcal{\tChi}) & =\sum_{\tl\in \mathfrak{C}_n;\tl\subseteq \ts} (-1)^{|\ts|-|\tl|} (\tChi(\tl)(0))^k=\sum_{\tl\in \mathfrak{C}_n;\tl\subseteq \ts} (-1)^{|\ts|-|\tl|}\bigl(r^+(\tl)-r^-(\tl)\bigr)^k\\
& =\sum^{r^+_{top}(\ts)}_{i=0}\sum^{r^-_{top}(\ts)}_{j=0} (-1)^{r_{top}(\ts)-i-j}{r^+_{top}(\ts) \choose i}{r^-_{top}(\ts) \choose j}\bigl(i-j+r^+_{low}(\ts)-r^-_{low}(\ts)\bigr)^{k}\\
\end{split}
\end{equation}
\end{proposition}
\begin{proof}[Proof of Formula \eqref{eq:c-F-simple}] Applying \eqref{eq:c_J,k} directly to $\pmb f_d$ we obtain the first identity in \eqref{eq:c-F-simple}. For the second equation in \eqref{eq:c-F-simple}, let $\tl\in \mathfrak{P}_n$ be the set of all $d$--faces. Since
$\pmb f_d=\sum_{I\in \tl} e_I$, for any $\tk\subseteq \tl$, Equation  \eqref{eq:C_J-eval}
implies
\begin{equation}\label{eq:c_k-f_d}
 c_{\tk}((\pmb f_d)^k)=\sum_{\tr\in \mathfrak{P}_n;\tr\subseteq \tk} (-1)^{|\tk|-|\tr|}\, (\pmb f_d(\tr)(0))^k=\sum^{|\tk|}_{i=1} (-1)^{|\tk|-i} {|\tk| \choose i} i^k. 
\end{equation}
Considering $\pmb f_d$ as an element of $\R_{\mathcal{I}}[e_I]$ and choosing a chain representative for $\pmb f_d^k$, we conclude that its coefficients $c_{\ts,k}(\pmb f_d^k)$ vanish 
unless the corresponding antichain $\widehat{\ts}$ consists of purely $d$--faces. In the latter case 
we obtain from \eqref{eq:c_k-f_d}
\[
 c_{\ts,k}((\pmb f_d)^k)=c_{\tk}((\pmb f_d)^k),\qquad \text{for}\quad \tk=\widehat{\ts},
\]
which implies the identity in \eqref{eq:c-F-simple} via the notation of \eqref{eq:faces-notation}.
\end{proof}
\no Next, we turn to the random polynomial $\tChi=\tChi(\mathsf{K})$. The range of $\tChi(\mathsf{K})$ is contained in $[\underline{m}(\tChi),\overline{m}(\tChi)]$ where 
\begin{equation}\label{eq:range-chi}
  \underline{m}(\tChi)=-\sum_{r;0< 2 r+1\leq n} {n \choose 2 r+1},\qquad\text{and}\qquad
  \overline{m}(\tChi)=\sum_{r;0< 2 r\leq n} {n \choose 2 r}.
\end{equation}
If $\mathsf{K}$ is supported on some subcomplex $\tk\in \mathfrak{C}_n$, smaller than the full $n$--simplex,  the above range can be narrowed to 
\[
 \underline{m}({\tChi(\mathsf{K})})=-\sum_{0\leq 2 r+1\leq \dim(\tk)} {\pmb f_{2 r+1}}(\tk),\qquad \overline{m}({\tChi(\mathsf{K})})=\sum_{0\leq 2 r\leq \dim(\tk)} {\pmb f_{2 r}}(\tk).
\]
\begin{proof}[Proof of Formula \eqref{eq:c-euler-simple}] Applying \eqref{eq:c_J,k} to $\tQ=\tChi$ directly, one obtains the first part of \eqref{eq:c-euler-simple}. To obtain the second part  we choose to present a 
different argument for the purpose of cross verification. Recall that given indeterminates $x_1,\ldots,x_m$, we have the following multinomial formula (c.f. \cite{Feller71})
\begin{equation}\label{eq:multinomial}
 (x_1+x_2+\ldots+x_m)^k=\sum_{\substack{\boldsymbol{\alpha}=(\alpha_1,\alpha_2,\ldots,\alpha_m),\\
|\pmb{\alpha}|=k}} {k \choose \boldsymbol\alpha}\, x_1^{\alpha_1}
x_2^{\alpha_2}\ldots x_m^{\alpha_m},
\end{equation}
\no where ${k \choose \boldsymbol{\alpha}}=\frac{k!}{\alpha_1 !\alpha_2 !\ldots \alpha_m !}$,  $\alpha_i\geq 0$, $|\pmb{\alpha}|=\sum_i\alpha_i$ and $\boldsymbol{\alpha}$ form all possible partitions of $k$. Let $\pmb\alpha$ have  coordinates indexed by $f(n)$ (i.e. faces of $\mathbf{\Delta}_n$). A direct application of \eqref{eq:multinomial} to \eqref{eq:rnd-euler-poly} yields
\begin{equation}\label{eq:chi^k-multi}
\begin{split}
 (\tChi)^k & =\sum_{\substack{\pmb{\alpha}=(\alpha_I),\\
 |\pmb{\alpha}|=k}} {k \choose \pmb{\alpha}} \prod_{I\in f(n)}  \Bigl((-1)^{|I|-1} e_I\Bigr)^{\alpha_I}\\  & =\sum_{\substack{\pmb{\alpha}=(\alpha_I),\\ |\pmb{\alpha}|=k}} \Bigl((-1)^{\sum_{I\in \ts(\boldsymbol{\alpha})} (|I|-1)\alpha_I} \Bigr){k \choose \boldsymbol{\alpha}} e_{\ts(\boldsymbol{\alpha})},
\end{split}
\end{equation}
where we denoted
\begin{equation}\label{eq:J-alpha}
\ts(\pmb{\alpha})=\{I\in f(n)\ |\ \alpha_I> 0\}.
\end{equation}
Observe that for any $\pmb\alpha$ and $\pmb\alpha'$, 
\begin{equation}\label{eq:e-alpha=e-alpha'}
 e_{\ts(\pmb\alpha)}=e_{\ts(\pmb\alpha')},\qquad \text{in}\quad \R_{\mathcal I}[e_I],
\end{equation}
 if and only if the corresponding antichains are the same i.e. $\widehat{\ts(\pmb\alpha)}=\widehat{\ts(\pmb\alpha')}$. Fix a chain representative of some complex $\ts\in \mathfrak{C}_n$ and let $\widehat{\ts}$ be the corresponding antichain. Clearly, $\widehat{\ts}\subseteq \ts$, consider  partitions $\pmb\alpha$ of $k$ which are in the form $\pmb\alpha=\pmb\beta+\pmb\gamma$ where $\pmb\beta=(\beta_I)$, satisfies: $\beta_I>0$ for $I\in \widehat{\ts}$ and $\beta_I=0$ for $I\in\ts-\widehat{\ts}$, and   $\pmb\gamma=(\gamma_I)$ satisfies:  $\gamma_I\geq 0$ for $I\in \ts-\widehat{\ts}$ and $\gamma_I=0$ for $I\in \widehat{\ts}$. The following claim immediately follows

\no {\bf Claim:} Given $\ts\in \mathfrak{C}_n$ and any partition $\pmb\alpha$ of $k$ indexed by $f(n)$,  we have  $\widetilde{\Pi}(\ts(\pmb\alpha))=\ts$ if and only if $\pmb\alpha$ has the above decomposition: $\pmb\beta+\pmb\gamma$.

\no Therefore, the $c_{\ts,k}(\mathcal{\tChi})$ coefficient of the chain representative of $(\tChi)^k$ is a sum of coefficients of $e_{\ts(\pmb\alpha)}$ for all $\pmb\alpha$ in the form $\pmb\beta+\pmb\gamma$. Applying notation \eqref{eq:faces-notation} we may express it as
\begin{align}\label{eq:ch-exp-c_J}
 & \hspace{2cm} (\tChi)^k =\sum_{\ts\in \mathfrak{C}_n} c_{\ts,k}(\tChi)\, e_{\ts},\qquad\mbox{where}\\
\notag c_{\ts,k}(\mathcal{\tChi}) & =\begin{cases}
 {\displaystyle \sum_{\substack{(\pmb\beta,\pmb\gamma)=(\beta_1,\ldots,\beta_{r_{top}},\gamma_1,\dots,\gamma_{r_{low}}),\\ 
|\pmb\beta|+|\pmb\gamma|=k,\,\beta_i>0,\gamma_j\geq 0}}} {\displaystyle (-1)^{\sum^{r_{top}}_{i=1} (|I_i|-1)\beta_i+\sum^{r_{low}}_{j=1} (|J_j|-1)\gamma_j}{k \choose \pmb\beta,\pmb\gamma}}, & \quad \text{if}\ k\geq r,\\
 0, & \quad \text{otherwise},
\end{cases}
\end{align}
where we indexed the faces of $\widehat{\ts}$ in $\ts$ by $\{I_i\}$, $i=1,\ldots, r_{top}$ and faces of $\ts-\widehat{\ts}$ in $\ts$  by $\{J_j\}$, $j=1,\ldots, r_{low}$. To set up the inclusion--exclusion principle, note that the sum for $c_{\ts,k}(\mathcal{\tChi})$ is a part of the 
larger sum (where we allow $\beta_i\geq 0$, and $(\pmb\beta,\pmb\gamma)=(\beta_1,\ldots,\beta_{r_{top}},\gamma_1,\dots,\gamma_{r_{low}})$):
\[
{\displaystyle \sum_{\substack{(\pmb\beta,\pmb\gamma)\\ 
|\pmb\beta|+|\pmb\gamma|=k,\,\beta_i\geq 0,\gamma_j\geq 0}}} {\displaystyle (-1)^{\sum^{r_{top}}_{i=1} (|I_i|-1)\beta_i+\sum^{r_{low}}_{j=1} (|J_j|-1)\gamma_j}{k \choose \pmb\beta,\pmb\gamma}}=\Bigl(\sum^{r_{low}}_{i=1} (-1)^{(|I_i|-1)}+\sum^{r_{top}}_{j=1} (-1)^{(|J_j|-1)}\Bigr)^k.
\]
We stratify the above sum with respect to number of $\beta_i$'s strictly greater than zero, and set up the inclusion--exclusion as follows. Let $F=\{1,\ldots,r_{top}\}$ and define 
for any $S\subseteq F$, functions $f$, $g$ (in \eqref{eq:incl-excl-g-to-f}, \eqref{eq:incl-excl-f-to-g}) as
\[
 \begin{split}
  f(S) & = {\displaystyle \sum_{\substack{(\pmb\beta,\pmb\gamma)=(\{\beta_i\},\{\gamma_j\}), 
|\pmb\beta|+|\pmb\gamma|=k,\gamma_j\geq 0,\\ 
\beta_i> 0,\ \text{if $i\in S$},\ \beta_i=0\ \text{if $i\not\in S$.}}}} {\displaystyle (-1)^{\sum^{r_{top}}_{i=1} (|I_i|-1)\beta_i+\sum^{r_{low}}_{j=1} (|J_j|-1)\gamma_j}{k \choose \pmb\beta,\pmb\gamma}},\\
 g(S) & = \Bigl(\sum_{i\in S} (-1)^{(|I_i|-1)}+\sum^{r_{low}}_{j=1} (-1)^{(|J_j|-1)}\Bigr)^k.
 \end{split}
\]
Observe that $\sum^{r_{low}}_{j=1} (-1)^{(|J_j|-1)}=r^+_{low}-r^-_{low}$, which yields
\[
g(S)=\Bigl(|S^+|-|S^-|+r^+_{low}-r^-_{low}\Bigr)^k.
\]
where $|S^+|$($|S^-|$) denotes number of even(odd) dimensional faces of $\widehat{\ts}$ indexed by $S$. 
By \eqref{eq:incl-excl-f-to-g} we obtain
\[
 f(F)=\sum_{S:S\subseteq F} (-1)^{r_{top}-|S|} \Bigl(|S^+|-|S^-|+r^+_{low}-r^-_{low}\Bigr)^k.
\]
Since there are $r^+_{top}$ even dimensional faces and $r^-_{top}$ odd dimensional faces in $\widehat{\ts}$, for a fixed $i\in [0,r^+_{top}]$ and $j\in [0,r^-_{top}]$ there are
exactly ${r^+_{top} \choose i}{r^-_{top} \choose j}$ subsets $S\subseteq F$ satisfying $i=|S^+|$, $j=|S^-|$. Thus the second part of \eqref{eq:c-euler-simple} now follows from  $f(F)=c_{\ts,k}(\mathcal{\tChi})$.
\end{proof}

As the last case of interest, we consider is the relative Euler characteristic $\tChi_{rel}=\tChi_{rel}(\mathsf{K}, \mathsf{L})$ of a random pair $(\mathsf{K},\mathsf{L})$. Denoting the characteristic functions of $\mathsf{K}$ by $\{e_I\}$ and of $\mathsf{L}$ by $\{w_J\}$, 
\eqref{eq:chi-rel} and \eqref{eq:rk(X,Y)} imply the following polynomial expression 
\begin{equation}\label{eq:rnd-euler-char-rel}
\begin{split}
   \tChi_{rel} & =\sum^{n-1}_{d=0}(-1)^{k}\Bigl(\sum_{I\in f_d(n)} (e_{I}- w_{I})\Bigr).
\end{split}
\end{equation}
\no Analogously, as in the absolute case, the distribution of $(\mathsf{K},\mathsf{L})$ is determined by 
\begin{equation}\label{eq:p_IJ-e-f}
p_{\ts,\tr}=\Pm(e_{\ts}=1,w_{\tr}=1)=\Pm(e_{\ts}w_{\tr}=1).
\end{equation}
\no The maximal constants for the range of  $\tChi_{rel}(\mathsf{K},\mathsf{L})$ are
\begin{equation}\label{eq:euler-range-rel}
 \underline{m}(\tChi_{rel})=\underline{m}(\tChi)-\overline{m}(\tChi),\quad \text{and}\quad \overline{m}(\tChi_{rel})=\overline{m}(\tChi)-\underline{m}(\tChi).
\end{equation}
\no For convenience we state the following corollary of Theorem \ref{thm:Q-dist}:

\begin{corollary}[Distribution of $\tChi_{rel}(\mathsf{K},\mathsf{L})$]\label{cor:euler-dist-rel}
 Given a random pair $(\mathsf{K},\mathsf{L})$, the distribution of $\tChi_{rel}$ on $[\underline{m}(\tChi_{rel}), \overline{m}(\tChi_{rel})]$ is 
 given as follows, for $j\in [0, N]$, $N=\overline{m}(\tChi_{rel})-\underline{m}(\tChi_{rel})$  
\begin{align}\label{eq:euler-dist-rel}
\Pm\bigl(\tChi_{rel}=\underline{m}(\tChi_{rel})+j\bigr)& =\sum_{(\ts,\tr)\in \mathfrak{C}_n\times \mathfrak{C}_n} a_{\ts,\tr,j}(\tChi_{rel})\,p_{\ts,\tr},\\
\notag a_{\ts,\tr,j}(\tChi_{rel}) & =\sum_{k=0}^{N}\bigl(v_{k j}(\tChi_{rel})\,c_{\ts,\tr, k}(\tChi_{rel})\bigr),
\end{align} 
where (using the notation of \eqref{eq:faces-notation})
\begin{align}\label{eq:c-euler-rel-simple}
&\mathbb{E}((\tChi_{rel}(\mathsf{K},\mathsf{L}))^k)  =\sum_{(\ts,\tr)\in \mathfrak{C}_n\times \mathfrak{C}_n} c_{\ts,\tr,k}\,p_{\ts,\tr},\qquad c_{\ts,\tr,k}=c_{\ts,\tr,k}(\tChi_{rel})\\
& \notag c_{\ts,\tr,k}=\begin{cases}
\displaystyle\sum_{\substack{{\scriptstyle i\in [0,r^+_{top}(\ts)],j\in [0,r^-_{top}(\ts)]},\\ {\scriptstyle i'\in [0,r^+_{top}(\tr)],j'\in [0,r^-_{top}(\tr)]}}}(-1)^{{\scriptstyle r_{top}(\ts)+r_{top}(\tr)-i-j-i'-j'}}
{{\scriptstyle r^+_{top}(\ts)} \choose {\scriptstyle i}}{{\scriptstyle r^-_{top}(\ts)} \choose {\scriptstyle j}}{{\scriptstyle r^+_{top}(\tr)} \choose {\scriptstyle i'}}{{\scriptstyle r^-_{top}(\tr)} \choose {\scriptstyle j'}}\cdot\\
\hfill\cdot{\scriptstyle \bigl((i-j)+(i'-j')+(r^+_{low}(\ts)-r^-_{low}(\ts))+(r^+_{low}(\tr)-r^-_{low}(\tr))\bigr)^{k}},\ \text{for}\quad k\geq r,\\
\\
\qquad\qquad 0,\qquad \text{for}\quad k<r.
\end{cases}
\end{align} 
\end{corollary}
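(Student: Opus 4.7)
The plan is to apply Theorem \ref{thm:Q-dist} to $\tQ=\tChi_{rel}$, now regarded as an element of the enlarged quotient ring $\R_{\mathcal I}[e_I,w_J]$ discussed after \eqref{eq:relations-R_I-pair}. The analogue of Lemma \ref{lem:R_I-ring} noted there guarantees that chain representatives are well-defined in this ring and that the natural basis of indicator monomials is $\{e_{\ts}w_{\tr}\}$ indexed by $(\ts,\tr)\in\mathfrak{C}_n\times\mathfrak{C}_n$. Reindexing the conclusions of Theorem \ref{thm:Q-dist} by such pairs, with $p_{\ts}$ replaced by $p_{\ts,\tr}$ from \eqref{eq:p_IJ-e-f} and with the range bounds supplied by \eqref{eq:euler-range-rel}, immediately yields the distribution and moment formulas \eqref{eq:euler-dist-rel}--\eqref{eq:c-euler-rel-simple}, reducing the corollary to the explicit evaluation of $c_{\ts,\tr,k}(\tChi_{rel})$.

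For that evaluation I would mirror the multinomial-plus-inclusion-exclusion proof of \eqref{eq:c-euler-simple}, now carried out on two sets of variables. Applying \eqref{eq:multinomial} to the polynomial \eqref{eq:rnd-rel-euler-poly} expresses $(\tChi_{rel})^{k}$ as a signed multinomial sum over pairs $(\pmb\alpha,\pmb\alpha')$ with $|\pmb\alpha|+|\pmb\alpha'|=k$, where $\pmb\alpha=(\alpha_{I})$ records the exponents of the $e_{I}$'s and $\pmb\alpha'=(\alpha'_{I})$ the exponents of the $w_{I}$'s. After collapsing each monomial via \eqref{eq:relations-R_I} and \eqref{eq:relations-R_I-pair}, only the pairs with $\widetilde{\Pi}(\ts(\pmb\alpha))=\ts$ and $\widetilde{\Pi}(\ts(\pmb\alpha'))=\tr$ contribute to the coefficient of $e_{\ts}w_{\tr}$; the Claim in the proof of \eqref{eq:c-euler-simple} then forces $\pmb\alpha=\pmb\beta+\pmb\gamma$ and $\pmb\alpha'=\pmb\beta'+\pmb\gamma'$, with $\pmb\beta,\pmb\beta'$ strictly positive on the top faces $\widehat{\ts},\widehat{\tr}$ and $\pmb\gamma,\pmb\gamma'$ nonnegative on the lower faces.

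I would then execute two parallel inclusion--exclusion passes, one over subsets $S\subset\{1,\ldots,r_{top}(\ts)\}$ tracking which top coordinates of $\pmb\beta$ are permitted to vanish, and one over the analogous $S'\subset\{1,\ldots,r_{top}(\tr)\}$ for $\pmb\beta'$. As in the absolute case this rewrites $c_{\ts,\tr,k}(\tChi_{rel})$ in the form
\[
\sum_{S,S'}(-1)^{r_{top}(\ts)+r_{top}(\tr)-|S|-|S'|}\,g(S,S')^{k},
\]
where $g(S,S')$ is the unconstrained alternating sum of $\pm 1$ contributions from the top faces indexed by $S,S'$ together with all lower faces of $\ts$ and $\tr$. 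Stratifying $(S,S')$ by the even/odd profile $(i,j,i',j')=(|S^{+}|,|S^{-}|,|S'^{+}|,|S'^{-}|)$ and noting that exactly ${r^{+}_{top}(\ts)\choose i}{r^{-}_{top}(\ts)\choose j}{r^{+}_{top}(\tr)\choose i'}{r^{-}_{top}(\tr)\choose j'}$ pairs share any given profile delivers the four-fold sum \eqref{eq:c-euler-rel-simple}. The main obstacle is bookkeeping: tracking the opposing signs carried by the $e_{I}$ and $w_{I}$ contributions through the two simultaneous inclusion--exclusion passes. The saving grace is that $\tChi_{rel}$ separates additively between the two variable sets, so the passes decouple and the argument essentially amounts to running the calculation of \eqref{eq:c-euler-simple} twice in parallel.
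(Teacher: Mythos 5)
Your overall strategy is the right one and is in fact exactly what the paper intends: the paper omits the proof of this corollary, stating it is ``fully analogous'' to the absolute case, and your plan --- pass to $\R_{\mathcal I}[e_I,w_J]$, reindex Theorem \ref{thm:Q-dist} by pairs, and run the multinomial/inclusion--exclusion computation of \eqref{eq:c-euler-simple} on the two variable sets in parallel --- is that analogous argument. The gap sits precisely at the point you flag and then defer: the sign bookkeeping. In \eqref{eq:rnd-rel-euler-poly} the variable $w_I$ carries the coefficient $-(-1)^{|I|-1}$, so the faces of $\tr$ enter $g(S,S')$ with the parity convention opposite to that of the faces of $\ts$. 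Carrying out the two inclusion--exclusion passes honestly gives
\[
 g(S,S')=\bigl(|S^+|-|S^-|\bigr)-\bigl(|S'^+|-|S'^-|\bigr)+\bigl(r^+_{low}(\ts)-r^-_{low}(\ts)\bigr)-\bigl(r^+_{low}(\tr)-r^-_{low}(\tr)\bigr),
\]
not the all--plus expression inside the $k$-th power in \eqref{eq:c-euler-rel-simple}. The two are genuinely different. Test with $n=1$ and $\mathsf{K}=\mathsf{L}$ equal to the single vertex almost surely, so that $\tChi_{rel}\equiv 0$ and every moment vanishes: the coefficients as printed give $\mathbb{E}(\tChi_{rel}^k)=0+1+1+(2^k-2)=2^k$ for $k\ge 2$, whereas the signed version above gives $0+1+(-1)^k+\bigl(-1-(-1)^k\bigr)=0$. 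So your closing assertion that the stratification ``delivers the four-fold sum \eqref{eq:c-euler-rel-simple}'' cannot follow from the computation you describe; what that computation actually proves is the sign-corrected coefficient (consistent with $\tChi_{rel}=\tChi(\mathsf{K})-\tChi(\mathsf{L})$, and still reducing to \eqref{eq:c-euler-simple} when $\mathsf{L}=\text{\rm \O}$, since then all $\tr$-terms vanish). You must either track the signs through and state the corrected formula, or justify why the discrepancy is immaterial --- and it is not.

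A smaller issue: you invoke the cross-relations $w_I=w_Ie_J$ of \eqref{eq:relations-R_I-pair} to collapse monomials, but the statement sums over all of $\mathfrak{C}_n\times\mathfrak{C}_n$ and its coefficient formula does not assume $\tr\subset\ts$. If you reduce $e_{\ts}w_{\tr}$ to $e_{\ts\cup\tr}w_{\tr}$, the coefficients redistribute onto pairs with $\tr\subset\ts$ and no longer match a formula indexed by arbitrary pairs. The moment identity is unaffected, because $p_{\ts,\tr}=p_{\ts\cup\tr,\tr}$ by property {\bf (C)}, but to land on coefficients indexed by all pairs you should expand using only the idempotency relations within each variable set. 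This is a presentational matter rather than an error, but it should be made explicit since it changes which object your $c_{\ts,\tr,k}$ denotes.
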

\no The proof is as fully analogous the previous arguments and is omitted. Note that the expression for 
$c_{\ts,\tr,k}(\tChi_{rel})$ in  \eqref{eq:c-euler-rel-simple} simplifies to \eqref{eq:c-euler-simple} whenever $\mathsf{L}=\text{\rm \O}$.


\section{Coverings of one--complexes and the Euler characteristic.}\label{sec:topological}
Given  a deterministic {\it covering} of a finite simplicial complex $X$, i.e. a collection of compact connected subsets  $A=\{A_{\{i\}}\}$, we can define its  {\em nerve}, $\mathcal{N}(A)$  as a finite
 complex where vertices $\{i\}$ are just elements $A_{\{i\}}$ of the covering and a $k$-face $I=\{i_1,\ldots, i_{k+1}\}$ belongs to $\mathcal{N}(A)$, if and only if $A_{\{i_1\}}\cap A_{\{i_2\}}\cap\ldots\cap A_{\{i_{k+1}\}}\neq \text{\rm \O}$ (c.f. \cite{Wallace70}). 
\begin{figure}[ht]
\vspace{-.5cm}
\begin{center}
   \includegraphics[width=.45\textwidth,height=.3\textheight]{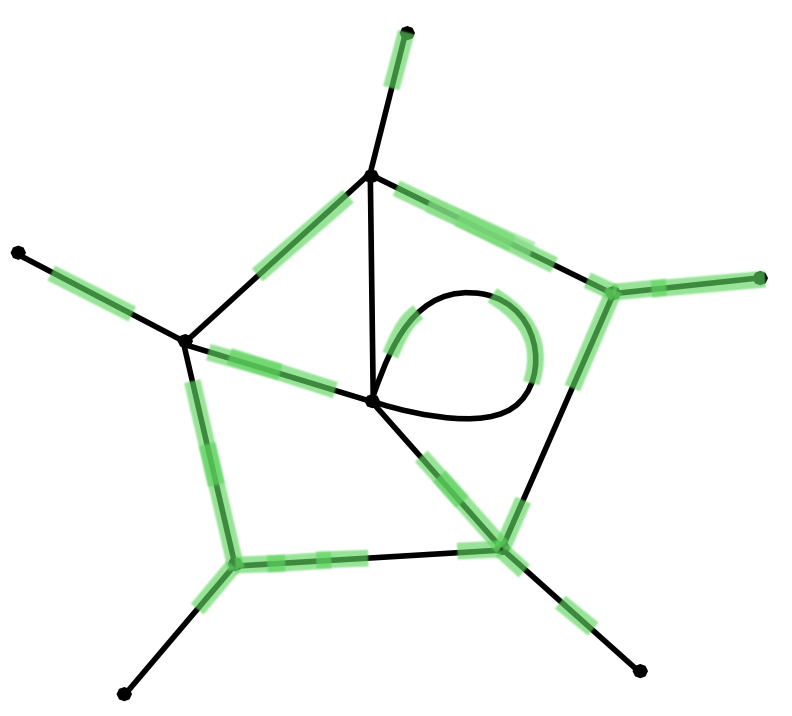}
\end{center}
{\footnotesize Figure \ref{fig:1-complex}: An example of a 1-complex with marked realization of a good cover.}
	\label{fig:1-complex}
\end{figure}
  
 The following result, due to Borsuk \cite{Borsuk48}, is of fundamental importance in algebraic topology
\begin{lemma}[The Nerve Lemma \cite{Borsuk48}]\label{lem:nerve}
 Let $A=\{A_{\{i\}}\}$ be a covering of $X$ and $\mathcal{N}(A)$ the associated nerve. If all intersections $A_{\{i_1\}}\cap A_{\{i_2\}}\cap\ldots\cap A_{\{i_{k+1}\}}$, for $k>0$ are contractible, then
$\mathcal{N}(A)$ has a homotopy type of the subspace $|A|=\bigcup_i A_{\{i\}}$ of $X$.
\end{lemma}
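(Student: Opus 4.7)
The plan is to exhibit a common homotopy type for $|A|=\bigcup_i A_i$ and $|\mathcal{N}(A)|$ via the \emph{homotopy colimit} (or ``blow-up'') $Y$ of the diagram $\sigma\mapsto A_\sigma:=\bigcap_{i\in \sigma} A_i$ indexed by the simplices of $\mathcal{N}(A)$. Concretely, $Y$ is the quotient of $\coprod_{\sigma\in \mathcal{N}(A)}(\Delta^\sigma\times A_\sigma)$ by the natural face identifications; it carries two canonical projections
\[
\pi_{|A|}\colon Y\to |A|,\qquad \pi_{\mathcal N}\colon Y\to |\mathcal{N}(A)|,
\]
and the strategy is to show each projection is a homotopy equivalence, whence $|\mathcal{N}(A)|\simeq Y\simeq |A|$.

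For $\pi_{\mathcal N}$, the fiber over the interior of a simplex $\sigma$ deformation retracts onto $A_\sigma$, which is contractible by hypothesis. Since $\pi_{\mathcal N}$ is compatible with the skeletal filtration of the CW complex $|\mathcal{N}(A)|$, a skeletal induction over the number of cells (equivalently, an appeal to Dold's theorem for quasi-fibrations with contractible fibers) upgrades the cell-by-cell contractibility to the assertion that $\pi_{\mathcal N}$ is a genuine homotopy equivalence.

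For $\pi_{|A|}$, I would first enlarge the compact cover $\{A_i\}$ to an open cover $\{U_i\}$ with $A_i\subset U_i$ that has the same nerve and the same contractibility pattern of iterated intersections; this is routine in the $1$-complex setting by taking sufficiently small tubular thickenings of connected compacta in the graph. Fix a partition of unity $\{\varphi_i\}$ subordinate to $\{U_i\}$. Then for each $x\in|A|$, the fiber $\pi_{|A|}^{-1}(x)$ is identified with the subcomplex of $\mathcal{N}(A)$ spanned by $\{i\,:\,x\in A_i\}$, which is a full simplex and therefore contractible; the partition of unity produces a canonical section $s(x)=\sum_i\varphi_i(x)\,x_i$ realizing this contraction. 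Dold's criterion again delivers that $\pi_{|A|}$ is a homotopy equivalence, and composing with a homotopy inverse of $\pi_{\mathcal N}$ yields the desired map $|\mathcal{N}(A)|\simeq |A|$.

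The main obstacle I anticipate is the open-thickening step: one must ensure that inflating the compact $A_i$ to open neighborhoods $U_i$ neither creates new nonempty intersections nor destroys contractibility of the existing ones. For connected compacta in a $1$-complex this is geometrically transparent (tubular neighborhoods of trees are contractible, and the combinatorics of intersections is stable under sufficiently small dilations), but in greater generality one must either invoke a paracompactness-plus-good-cover hypothesis or bypass thickenings entirely via a Mayer--Vietoris spectral sequence induction on $|\{A_i\}|$, at the cost of a considerably heavier argument.
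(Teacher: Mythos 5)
The paper does not actually prove this lemma: it is quoted as Borsuk's classical result \cite{Borsuk48} and used as a black box, so there is no internal argument to compare yours against. Taken on its own terms, your outline is the standard modern proof via the Mayer--Vietoris blow-up (homotopy colimit) $Y$, and both halves have the right structure: the skeletal induction for $\pi_{\mathcal N}$ is just the gluing lemma applied to the cofibrations $\partial\Delta^\sigma\times A_\sigma\hookrightarrow \Delta^\sigma\times A_\sigma$ together with $A_\sigma\simeq \ast$, and the partition-of-unity section followed by the straight-line fiberwise homotopy is the correct mechanism for $\pi_{|A|}$, since the fibers are full simplices and hence convex in the barycentric coordinates.

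The one place that needs genuine repair is the interface between the closed cover and the open thickening. You identify the fiber $\pi_{|A|}^{-1}(x)$ as the simplex on $\{i: x\in A_i\}$, i.e. you work with the blow-up of the \emph{compact} sets, but the section $s(x)=\sum_i\varphi_i(x)[i]$ built from a partition of unity subordinate to $\{U_i\}$ need not land in that blow-up: $\varphi_i(x)>0$ only gives $x\in U_i$, not $x\in A_i$. So the whole second step must be run for the open cover $\{U_i\}$, and you then owe three checks, not two: (i) $\mathcal N(U)=\mathcal N(A)$, (ii) the intersections $U_\sigma$ remain contractible, and (iii) $\bigcup_i U_i$ deformation retracts onto $\bigcup_i A_i$ --- item (iii) is absent from your list but is needed to carry the conclusion back to $|A|$. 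For finitely many compact connected sets in a compact metric graph all three hold for sufficiently small $\varepsilon$-thickenings (empty intersections stay empty by compactness, and intersections of thickened subtrees retract onto the originals), which is precisely the regime of Proposition \ref{prop:good-rips-eps-cover}; but this is exactly where closed-cover nerve theorems fail in more general spaces, so it merits an actual argument rather than the label ``routine.'' Alternatively, note that Borsuk's theorem as cited is stated directly for closed covers of compacta, so one can invoke it in that form and dispense with the thickening altogether.
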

 Recall that a subset of $X$ is contractible 
if it can be deformed continuously to a point  \cite{Hatcher02}. 
If $A=\{A_{\{i\}}\}$ satisfies the assumption of this lemma then we call it a {\em good} covering (of $X$).

 In the remainder of this section we collect 
elementary facts from algebraic topology and show how the Euler characteristic of $\mathcal{N}(A)$ provides a criteria for a good deterministic covering $A=\{A_{\{i\}}\}$, to completely cover a connected $1$--complex $X$, the proofs are basic and are either omitted or deferred to Appendix \ref{sec:appendix}. 

\subsection{Coverage and the nerve complex}\label{sec:cov-via-nerve} We assume throughout that $X$ is a connected $1$--complex (c.f. \cite[p. 103]{Hatcher02}) homeomorphic to a multi-graph, and denote $\partial X$ the set of leaf vertices of $X$. 

\begin{proposition}\label{prop:chi-ineq}
 Let $\{A_{\{i\}}\}$ be a good covering  of $X$, $|A| = \bigcup_{i} A_{\{i\}}$, denote  $U = |A|$ and $V = \overline{|A|^c}$. Then, 
\begin{equation}\label{eq:betti_1X-U}
 \beta_1(X) \ge \beta_1(U),
\end{equation}
and 
\begin{equation}\label{eq:chiX-U}
 \chi(X) \le \chi(U).
\end{equation}
Moreover, if the inequality in \eqref{eq:betti_1X-U} is strict then \eqref{eq:chiX-U} is also strict.
\end{proposition}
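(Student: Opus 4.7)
The plan is to apply the Mayer--Vietoris exact sequence to the decomposition $X = U \cup V$. The only nontrivial case is $V \neq \text{\rm \O}$ (if $V = \text{\rm \O}$ then $U = X$ and both inequalities are equalities), so assume $V$ is nonempty. Because the covering is good, after a subdivision of $X$ we may arrange that $U$ and $V$ are finite subcomplexes of $X$ whose intersection $U \cap V$ is precisely the finite set of boundary vertices between $|A|$ and its closed complement; in particular $U \cap V$ is $0$-dimensional. Since $X$ is itself $1$-dimensional, $H_2(X) = 0$ and $H_1(U \cap V) = 0$, so the Mayer--Vietoris sequence collapses to
\[
0 \to H_1(U) \oplus H_1(V) \to H_1(X) \to H_0(U \cap V) \to H_0(U) \oplus H_0(V) \to H_0(X) \to 0.
\]
Taking alternating sums of ranks, and using $\beta_0(X) = 1$, yields the two identities
\[
\beta_1(X) = \beta_1(U) + \beta_1(V) + |U \cap V| - \beta_0(U) - \beta_0(V) + 1, \qquad \chi(X) = \chi(U) + \chi(V) - |U \cap V|.
\]

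To bound the first identity, I would form the auxiliary graph $G$ whose vertices are the connected components of $U$ together with those of $V$, and whose edges are the points of $U \cap V$ (each such point joining the unique component of $U$ and the unique component of $V$ containing it). Since $X$ is connected, so is $G$, and hence $|U \cap V| \geq \beta_0(U) + \beta_0(V) - 1$. Combined with $\beta_1(V) \geq 0$, this immediately gives $\beta_1(X) \geq \beta_1(U)$.

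For the Euler characteristic inequality I would rewrite the second identity as $\chi(U) - \chi(X) = \beta_1(V) + |U \cap V| - \beta_0(V)$ and observe that each component of $V$ must contain at least one point of $U \cap V$ (otherwise it would be a nonempty clopen proper subset of the connected space $X$), giving $|U \cap V| \geq \beta_0(V)$ and hence $\chi(X) \leq \chi(U)$. Subtracting the Betti identity from the last relation yields
\[
\chi(U) - \chi(X) - \bigl(\beta_1(X) - \beta_1(U)\bigr) = \beta_0(U) - 1 \geq 0,
\]
so any strict inequality in $\beta_1$ is automatically inherited by $\chi$.

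The main technical obstacle I anticipate is the subdivision step: one needs the goodness hypothesis to guarantee that each $A_i$ is sufficiently tame --- in particular contractible, hence a compact subtree of the $1$-complex $X$ --- so that $U \cap V$ is genuinely a finite vertex set and Mayer--Vietoris applies in its subcomplex form. Once this regularity is secured, the rest of the argument is straightforward bookkeeping.
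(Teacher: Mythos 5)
Your proof is correct and follows essentially the same route as the paper: a Mayer--Vietoris argument for the decomposition $X=U\cup V$ with $U\cap V$ a finite set of points, so that $H_1(U)\oplus H_1(V)$ injects into $H_1(X)$. The only organizational difference is that you extract the full alternating-sum identities (plus the two counting bounds on $|U\cap V|$) and settle the ``moreover'' clause via the single relation $\chi(U)-\chi(X)-\bigl(\beta_1(X)-\beta_1(U)\bigr)=\beta_0(U)-1\geq 0$, whereas the paper gets \eqref{eq:betti_1X-U} directly from injectivity and proves \eqref{eq:chiX-U} by a case split on $\beta_0(U)$; both are sound.
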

\no By the Nerve Lemma, an obvious necessary condition for $X\subseteq |A|$ is
\begin{equation}\label{eq:chiX=chiA}
 \chi(X)=\chi(|A|)=\chi(\mathcal{N}(A)).
\end{equation}
\no If $\partial X=\text{\rm \O}$, we have the following 
\begin{corollary}\label{cor:chi-covers-no-bdry}
 Suppose $X$ satisfies $\partial X=\text{\rm \O}$, then \eqref{eq:chiX=chiA} implies $X\subseteq |A|$.
\end{corollary}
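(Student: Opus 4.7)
The plan is to argue by contradiction: assume $X\not\subset |A|$ and strengthen Proposition~\ref{prop:chi-ineq} to the strict inequality $\chi(U) > \chi(X)$, where $U = |A|$ and $V = \overline{|A|^c}$. Combined with $\chi(\mathcal{N}(A)) = \chi(|A|)$ (the Nerve Lemma applied to the good cover $A$) and the standing hypothesis \eqref{eq:chiX=chiA}, this yields the desired contradiction. Note that $V = \text{\rm \O}$ is equivalent to $X\subset |A|$, so the contradictory assumption means $V$ is nonempty.

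Reusing the Mayer--Vietoris sequence of Proposition~\ref{prop:chi-ineq}, together with the fact that $\chi(U \cap V) = |\partial A|$ is just the cardinality of a finite set of points, I would first derive the identity
\[
\chi(U) - \chi(X) \;=\; \chi(U \cap V) - \chi(V) \;=\; \sum_{j}\bigl((k_j - 1) + \beta_1(V_j)\bigr),
\]
where $V = \bigsqcup_j V_j$ is the decomposition into connected components and $k_j := |V_j \cap U|$. Since $X$ is connected and $U \neq \text{\rm \O}$, every $V_j$ must meet $U$ (otherwise $V_j$ would be a proper clopen subset of $X$), so $k_j \geq 1$ and every summand is nonnegative. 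The argument thus reduces to excluding the degenerate case $k_j = 1$ with $\beta_1(V_j) = 0$.

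Here is where $\partial X = \text{\rm \O}$ becomes essential. First, $V$ has no isolated points: any $p \in V = \overline{|A|^c}$ either lies in the open set $|A|^c$, which as an open subset of the $1$-complex $X$ has no isolated points, or is a limit of points in $|A|^c \subset V$. Hence a component $V_j$ with $\beta_1(V_j) = 0$ is a nontrivial tree, and therefore has at least two leaf points (points with a half-interval neighborhood inside $V_j$). Because $\partial X = \text{\rm \O}$ forbids half-interval neighborhoods in $X$, every such leaf point $\ell$ of $V_j$ must be a ``cut-off'' where $V_j$ terminates against $U$, i.e.\ $\ell \in \partial A = U \cap V$. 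This forces $k_j \geq 2$ whenever $\beta_1(V_j) = 0$, so every summand is at least $1$ and
\[
\chi(U) - \chi(X) \;\geq\; \beta_0(V) \;\geq\; 1,
\]
contradicting $\chi(X) = \chi(U)$.

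The main obstacle is the local topological verification rather than any conceptual difficulty: one must argue carefully that a leaf point of the subgraph $V_j$ is forced into $\partial A$ by the hypothesis $\partial X = \text{\rm \O}$, and separately that $V$ contains no isolated points. Both are straightforward facts about $1$-complexes, but they must be made precise enough to apply to arbitrary good coverings of a $\Delta$-$1$-complex.
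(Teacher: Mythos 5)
Your argument is correct, and it reaches the conclusion by a genuinely different route than the paper. The paper's own proof is a short homotopy argument: since $\partial X=\text{\rm \O}$, $X$ is homotopy equivalent to a bouquet of $r$ circles, a missed point $p$ can be taken in the interior of an edge away from the wedge point, so $|A|\subset X-\{p\}$ forces $\beta_1(|A|)\leq \beta_1\bigl(\bigvee^{r-1}S^1\vee(S^1-\{p\})\bigr)<\beta_1(X)$, and the ``moreover'' clause of Proposition~\ref{prop:chi-ineq} then upgrades this to the strict inequality $\chi(X)<\chi(|A|)$, contradicting \eqref{eq:chiX=chiA}. You instead carry out the Mayer--Vietoris bookkeeping directly over the components $V_j$ of $V=\overline{|A|^c}$, arriving at $\chi(U)-\chi(X)=\sum_j\bigl(k_j-1+\beta_1(V_j)\bigr)$ and forcing each summand to be at least $1$ via the leaf-point analysis (a leaf of a tree component $V_j$ would require a half-interval neighborhood in $X$ unless it lies in $U$, and $\partial X=\text{\rm \O}$ rules the former out). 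Both proofs rest on the same additivity of $\chi$ for the closed cover $\{U,V\}$ with finite intersection, so you assume nothing beyond what Proposition~\ref{prop:chi-ineq} already uses implicitly; your version is longer and requires the local point-set care you flag, but it buys a quantitative strengthening, $\chi(|A|)-\chi(X)\geq\beta_0(V)$, i.e.\ the Euler characteristic of the nerve dominates (and generically counts) the number of gaps --- essentially the identity $\tChi(\mathsf{A})=\mathtt{G}(\mathsf{A})$ invoked later in Section~\ref{sec:stevens}. Two small points to pin down when writing this up: you need $U\neq\text{\rm \O}$ for the clopen argument (guaranteed because the $A_i$ are nonempty compact connected sets), and you should note that $V$ has finitely many components, each a finite subgraph with finitely many leaves, which follows from $|A|$ being a finite union of compact connected subsets of a finite $1$-complex.
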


\no When $\partial X\neq \text{\rm \O}$, the condition \eqref{eq:chiX=chiA} is insufficient; however we may adjust it by using the relative version $\chi_{rel}(X,\partial X)$ of the Euler characteristic \eqref{eq:chi-rel}. Note that for the pair $(X,\partial X)$, $\chi_{rel}(X,\partial X)$ reduces to 
\[
\chi_{rel}(X,\partial X)=\chi(X)-\#\{\partial X\},
\]
where $\#\{\partial X\}$ is a number of points in $\partial X$. By \cite[p. 102]{Hatcher02} we may consider the quotient complex $X'=X/\partial X$ which is a $1$--complex (\cite[p. 103]{Hatcher02}) with $\partial X'=\text{\rm \O}$, and
\[
 \chi_{rel}(X,\partial X)=\chi(X/\partial X).
\]
 Let $q:X\mapsto X'$ be the quotient projection, then the covering $A$ of $X$ projects to the covering $A'$ of $X'$. It is not true that $A'$ is automatically a good covering of $X'$, one may easily find examples where this is the case. However, the following  
 fact is available (proof left to the reader)
\begin{lemma}\label{lem:obvious}
 Given $A=\{A_{\{i\}}\}$ is a good covering of $X$, let for every $i$ the intersection $A_{\{i\}}\cap \partial X$  
be either empty or a point (in other words $A_{\partial X}=\{A_{\{i\}}\cap \partial X\}$ is a good covering of $\partial X$). Then the quotient covering $A'$ of $X'$ is also good.  
\end{lemma}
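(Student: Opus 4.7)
The plan is to verify the two hypotheses of the Nerve Lemma (Lemma \ref{lem:nerve}) directly for the image family $A'=\{A'_i\}$, where $A'_i:=q(A_i)$ and $q\colon X\to X'$ is the quotient projection collapsing $\partial X$ to a single basepoint~$*$. The structural observation driving the argument is that $q|_{X\setminus\partial X}$ is a homeomorphism onto $X'\setminus\{*\}$, so the only identifications introduced by $q$ occur at $\partial X$. Consequently, for any subset $B\subset X$ with $B\cap\partial X$ either empty or a single point $p$, the restricted map $q|_B\colon B\to q(B)$ is a homeomorphism onto its image (either no identification at all, or just the relabeling $p\mapsto*$). Applied to each $A_i$ this immediately yields $A'_i\cong A_i$, so every $A'_i$ inherits contractibility from the goodness of~$A$, which handles the vertices of~$\mathcal{N}(A')$.

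For a higher intersection $A'_{i_1}\cap\cdots\cap A'_{i_k}$ with $k\geq 2$, point-set analysis gives the decomposition
\[
A'_{i_1}\cap\cdots\cap A'_{i_k}\;=\;q\bigl((A_{i_1}\cap\cdots\cap A_{i_k})\setminus\partial X\bigr)\;\cup\; S,
\]
where $S=\{*\}$ if every $A_{i_j}$ meets $\partial X$ and $S=\emptyset$ otherwise. The hypothesis that $A_{\partial X}=\{A_i\cap\partial X\}$ is a good covering of the discrete set $\partial X$ is then used to force consistency at the basepoint: whenever $S=\{*\}$, the common boundary points $A_{i_j}\cap\partial X$ must agree at a single vertex $p\in\bigcap_j A_{i_j}$, so that $\bigcap_j A'_{i_j}$ is obtained from the contractible set $\bigcap_j A_{i_j}$ by the single relabeling $p\mapsto *$ and hence is again contractible. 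In the complementary case $S=\emptyset$, the intersection is directly homeomorphic via $q$ to $\bigcap_j A_{i_j}$, which is contractible by goodness of~$A$.

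The main technical hurdle is the case distinction at the basepoint: one must rule out configurations in which the quotient introduces a new connected component at $*$, detached from the rest of $\bigcap_j A_{i_j}$. The good-covering hypothesis on $A_{\partial X}$ is precisely what is used to prevent such a pathology, ensuring that every time the basepoint joins an intersection it does so through a genuine common boundary contact rather than as an artificial extra point. Once this case analysis is carried out, contractibility of all nonempty intersections follows, establishing that $A'$ is a good covering of~$X'$.
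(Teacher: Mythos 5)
The paper itself omits the proof of this lemma (it is stated with ``easy to see proof omitted''), so there is no official argument to compare against; but your proposal has a genuine gap at precisely the step you single out as the main technical hurdle. Your decomposition
\[
A'_{i_1}\cap\cdots\cap A'_{i_k}\;=\;q\bigl((A_{i_1}\cap\cdots\cap A_{i_k})\setminus\partial X\bigr)\;\cup\; S
\]
is correct, as is the observation that $*\in S$ exactly when every $A_{i_j}$ meets $\partial X$. The problem is the next assertion: that whenever $S=\{*\}$ the points $A_{i_j}\cap\partial X$ must all coincide at a common point $p\in\bigcap_j A_{i_j}$, so that $*$ is just a relabeling of a point already present in the contractible set $\bigcap_j A_{i_j}$. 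The hypothesis that $A_{\partial X}$ is a good covering of $\partial X$ gives no such consistency: it only says each individual $A_i$ meets $\partial X$ in at most one point, and distinct $A_{i_j}$'s are free to meet $\partial X$ at distinct boundary vertices. In that situation $*$ enters $\bigcap_j A'_{i_j}$ even though $\bigcap_j A_{i_j}\cap\partial X=\emptyset$, and it appears as an isolated extra connected component, destroying contractibility.

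Concretely, take $X=[0,1]$ with $\partial X=\{0,1\}$, $A_1=[0,0.6]$, $A_2=[0.4,1]$. Then $A$ is a good covering of $X$ and each $A_i\cap\partial X$ is a single point, so all hypotheses of the lemma hold; yet in $X'=X/\partial X\cong S^1$ one gets $A'_1\cap A'_2=q([0.4,0.6])\sqcup\{*\}$, which is disconnected, so $A'$ is not a good covering (this is the classical two-arc cover of $S^1$ whose nerve is a single edge while $|A'|=S^1$). So the gap cannot be closed as written: the ``consistency at the basepoint'' is asserted rather than derived, and it is false under the stated hypotheses. The statement needs a strengthening along the lines of: any subfamily whose members all meet $\partial X$ and whose common intersection is nonempty off $\partial X$ must meet $\partial X$ at a single common point (equivalently, sets touching $\partial X$ at distinct boundary vertices intersect, if at all, only in $\partial X$). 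The remainder of your argument --- reducing to contractibility of intersections, noting that $q|_{A_i}$ is a homeomorphism onto its image, and the displayed point-set decomposition --- is sound and would go through once such a hypothesis is added.
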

\no Consequently, we say that $A$ is a {\em good covering of the pair} $(X,\partial X)$ provided $A$ is good for $X$ and $A_{\partial X}$ is good for $\partial X$. Then by the above lemma $A'$ is good for $X'$ and Corollary  \ref{cor:chi-covers-no-bdry} says that 
$A'$ covers $X'$, if and only if $\chi(|A'|)=\chi(X')$.  It leads us to the following generalization of Corollary \ref{cor:chi-covers-no-bdry}.
\begin{lemma}\label{lem:chi-covers-bdry}
 Given a good covering $A=\{A_{\{i\}}\}$ of $(X,\partial X)$  let $|A|=\bigcup_i A_{\{i\}}$. Then $X\subseteq |A|$,   if and only if 
\begin{equation}\label{eq:chiX=chiA-rel}
 \chi_{rel}(\mathcal{N}(A),\mathcal{N}(A_{\partial X}))=\chi_{rel}(X,\partial X)
\end{equation}
or equivalently 
\begin{equation}\label{eq:chiX=chiA-rel-partial}
 \chi(|A|)=\chi(X)-\#\{\partial X\}+\#\{|A|\cap \partial X\}.
\end{equation}
\end{lemma}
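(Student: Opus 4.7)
The plan is to reduce Lemma \ref{lem:chi-covers-bdry} to the boundaryless case already handled by Corollary \ref{cor:chi-covers-no-bdry}. Set $X'=X/\partial X$, let $q\colon X\to X'$ be the quotient map, and put $A'=\{q(A_i)\}$, so that $|A'|=q(|A|)$. By Lemma \ref{lem:obvious}, $A'$ is a good covering of the boundaryless $1$-complex $X'$, and Corollary \ref{cor:chi-covers-no-bdry} then states that $X'\subset|A'|$ iff $\chi(X')=\chi(|A'|)$. Everything will follow once this Euler-characteristic equality is bridged, on one side to \eqref{eq:chiX=chiA-rel}/\eqref{eq:chiX=chiA-rel-partial}, and on the other to $X\subset|A|$.

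To see that \eqref{eq:chiX=chiA-rel} and \eqref{eq:chiX=chiA-rel-partial} are equivalent, the Nerve Lemma gives $\chi(\mathcal{N}(A))=\chi(|A|)$, while $|A_{\partial X}|=|A|\cap\partial X$ is a finite discrete set, so $\chi(\mathcal{N}(A_{\partial X}))=\#\{|A|\cap\partial X\}$. Since $\mathcal{N}(A_{\partial X})\subset \mathcal{N}(A)$ as a subcomplex,
\[
\chi_{rel}(\mathcal{N}(A),\mathcal{N}(A_{\partial X}))=\chi(|A|)-\#\{|A|\cap\partial X\},
\]
and combined with $\chi_{rel}(X,\partial X)=\chi(X)-\#\{\partial X\}$ this turns \eqref{eq:chiX=chiA-rel} into \eqref{eq:chiX=chiA-rel-partial}. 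To tie either form to $\chi(X')=\chi(|A'|)$, I would apply the standard CW collapse formula $\chi(Y/Z)=\chi(Y)-\chi(Z)+1$ (valid for non-empty $Z$) to the pairs $(X,\partial X)$ and $(|A|,|A|\cap\partial X)$: the additive constants $+1$ cancel on subtraction, yielding exactly \eqref{eq:chiX=chiA-rel-partial}. The degenerate cases are handled separately: $\partial X=\text{\rm \O}$ reduces the whole lemma to Corollary \ref{cor:chi-covers-no-bdry}, while $|A|\cap\partial X=\text{\rm \O}$ together with $\partial X\neq\text{\rm \O}$ is incompatible with \eqref{eq:chiX=chiA-rel-partial} since Proposition \ref{prop:chi-ineq} forces $\chi(|A|)\geq\chi(X)>\chi(X)-\#\{\partial X\}$.

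The genuinely delicate step is the equivalence $X\subset|A|\Leftrightarrow X'\subset|A'|$. The forward direction is immediate from surjectivity of $q$. For the reverse, $|A'|=X'$ combined with $q^{-1}(q(|A|))=|A|\cup\partial X$ gives $|A|\cup\partial X=X$, hence $X\setminus\partial X\subset|A|$. Now $|A|$ is a finite union of compact sets, so closed in $X$, while $X\setminus\partial X$ is dense in $X$ because every leaf $v\in\partial X$ is an endpoint of some edge along which non-vertex points accumulate at $v$. Therefore $X=\overline{X\setminus\partial X}\subset|A|$, finishing the reverse direction.

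I expect the main obstacle to be precisely this density/closure argument: it supplies the inclusion $\partial X\subset|A|$ that is \emph{not} directly encoded in $X'\subset|A'|$, and it leans on the compactness and finiteness built into the covering together with the graph-theoretic structure of leaf vertices of $X$. Once established, the lemma follows by chaining the equivalences $X\subset|A|\Leftrightarrow X'\subset|A'|\Leftrightarrow \chi(X')=\chi(|A'|)\Leftrightarrow \eqref{eq:chiX=chiA-rel-partial}\Leftrightarrow \eqref{eq:chiX=chiA-rel}$.
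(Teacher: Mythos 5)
Your proof is correct and follows essentially the same route as the paper's: pass to the quotient $X'=X/\partial X$, invoke Lemma \ref{lem:obvious} and Corollary \ref{cor:chi-covers-no-bdry}, establish $X\subset|A|\Leftrightarrow X'\subset|A'|$ (the paper does this by the contrapositive, picking a non-covered point off $\partial X$ where $q$ is a homeomorphism; your density-plus-closedness argument is the same idea), and translate back through the relative Euler characteristic. If anything you are more careful than the paper, which writes $\chi(X')=\chi_{rel}(X,\partial X)$ where the correct relation is $\chi(X')=\chi_{rel}(X,\partial X)+1$ for $\partial X\neq\text{\rm \O}$ --- an off-by-one that, as you observe, cancels when the two quotients are compared.
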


\begin{remark}
 {\rm 
 Equivalently, the coverage condition for $(X,\partial X)$ can be obtained by looking at the covering $\widehat{A}$, equal to a union of $A$ and the boundary vertices: 
$\partial X=\{x_1,\ldots,x_{\#\{\partial X\}}\}$. Then $\widehat{A}$ is good if satisfies the conditions of Lemma \ref{lem:obvious} 
\begin{align*}
 \chi(|\widehat{A}|) & =\chi(|A|\cup \partial X)=\chi(|A|)+\chi(\partial X)-\chi(|A|\cap \partial X)\\
& =\chi(|A|)+\#\{\partial X\}-\#\{|A|\cap \partial X\},
\end{align*}
which together with \eqref{eq:chiX=chiA} leads us to \eqref{eq:chiX=chiA-rel}. 
}
\end{remark}

\subsection{Coverage of \texorpdfstring{$X$}{X} by \texorpdfstring{$\varepsilon$}{epsilon}-balls. Vietoris--Rips complex.}\label{sec:nerve-A-rips}

A special case of interest  (see e.g. \cite{Wendl07, deSilva-Ghrist07b})  is when a connected $1$--complex $X$ ought to be covered by $\varepsilon$-size neighborhoods, and $\varepsilon$ can be sufficiently small. In such cases the topology of $\mathcal{N}(A)$  
simplifies and one may work with  Vietoris--Rips complex \cite{Hausmann95}, as we show in the following paragraphs. 

\no Recall that given a simplicial complex $K$ its {\em Vietoris--Rips complex} $\mathcal{R}(K)$, \cite{Hausmann95} is defined to be a maximal simplicial complex (with respect to inclusion) which 
has the same $1$-skeleton as $K$. In  practice, this means that $\mathcal{R}(K)$ is obtained by filling every $k$-clique in the graph $K^{(1)}$ with a $(k-1)$-dimensional face, e.g. $3$-cycles are filled with $2$-simplices in $\mathcal{R}(K)$, etc.

 We will consider a finite covering $A=\{A_{\{1\}},\ldots, A_{\{n\}}\}$ of $(X,d_X)$ by closed $\varepsilon$-balls. Possible shapes of such balls for  $\varepsilon$ sufficiently small are depicted on Figure \ref{fig:e-balls}. 
%
\begin{figure}[htbp]
    \begin{center}
     \includegraphics[width=0.9\textwidth,height=0.17\textheight]{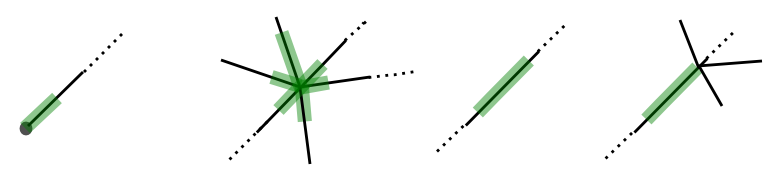}   
\end{center}
     { Figure \ref{fig:e-balls}: Possible shapes of closed $\varepsilon$-balls in $X$ with the intrinsic distance $d_X$.}\label{fig:e-balls}
\end{figure}


\no Let us denote by $\mathcal{R}(A)$ the Vietoris--Rips complex of the nerve of the cover, and record the following 
\begin{proposition}\label{prop:good-rips-eps-cover}
 Suppose $\mathcal{C}$ is the girth of $X'$, i.e. the length of the shortest cycle in the quotient complex $X'=X/\partial X$. Then, 
\begin{itemize}
\item[$(i)$]  if $\varepsilon < \frac{1}{4}\mathcal{C}$, the covering $A$ by $\varepsilon$-balls in $(X,d_X)$ is a good cover. 
\item[$(ii)$] if $\varepsilon < \frac{1}{6}\mathcal{C}$, the nerve $\mathcal{N}(A)$ of $A$   equals   $\mathcal{R}(A)$.
\end{itemize}
\end{proposition}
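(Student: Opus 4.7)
The plan is to reduce both parts to elementary facts about subtrees of a tree by observing that, under our smallness hypotheses, every ball involved sits inside a single ball of $X$ whose radius is less than $\mathcal{C}/2$, and any such ball is itself a tree. I will use two standard facts: (a) every metric ball in a tree is a subtree, and any intersection of subtrees of a tree is either empty or itself a subtree (hence contractible, since a tree is contractible); (b) \emph{Helly's theorem for trees}---any finite family of pairwise intersecting subtrees of a tree has a non-empty common intersection.

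The key preliminary claim is that every ball $B(p,r)\subset X$ with $r<\mathcal{C}/2$ is a tree. To prove this, recall that the universal cover $\tilde{X}$ of the $1$-complex $X$ is a tree and that the covering projection is a local isometry, so the injectivity radius at $p$ equals half the length of the shortest non-trivial loop based at $p$. Any such loop contains an embedded cycle of $X$ which (since $\partial X$ consists only of leaves and so never lies on a cycle) descends to a cycle of the same length in $X/\partial X$, and therefore has length at least $\mathcal{C}$. The injectivity radius is thus at least $\mathcal{C}/2$, so $B(p,r)$ lifts isometrically onto a metric ball of $\tilde X$; in particular it is a subtree.

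For part $(i)$, fix a non-empty intersection $A_{i_1}\cap\ldots\cap A_{i_k}$ and pick any $p$ in it. Since $d_X(p,\xi_{i_j})\le\varepsilon$ for each $j$, every ball $A_{i_j}\subset B(p,2\varepsilon)$, and the hypothesis $\varepsilon<\mathcal{C}/4$ gives $2\varepsilon<\mathcal{C}/2$, so by the preliminary claim $B(p,2\varepsilon)$ is a tree. Each $A_{i_j}$ is a subtree of it, and fact (a) then shows $A_{i_1}\cap\ldots\cap A_{i_k}$ is a non-empty subtree, hence contractible, verifying the Nerve Lemma hypothesis. For part $(ii)$, the containment $\mathcal{N}(A)\subseteq\mathcal{R}(A)$ is immediate. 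For the converse, let $\{i_1,\ldots,i_{k+1}\}$ be a simplex of $\mathcal{R}(A)$, so $d_X(\xi_{i_a},\xi_{i_b})\le 2\varepsilon$ for every pair. Then every center lies in $B(\xi_{i_1},2\varepsilon)$ and each $A_{i_j}\subset B(\xi_{i_1},3\varepsilon)$; the hypothesis $\varepsilon<\mathcal{C}/6$ gives $3\varepsilon<\mathcal{C}/2$, so this ambient ball is again a tree, inside which the $A_{i_j}$ are pairwise intersecting subtrees. Helly's theorem (fact (b)) yields a common point, placing $\{i_1,\ldots,i_{k+1}\}$ in $\mathcal{N}(A)$.

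The main obstacle is the preliminary claim about the tree structure of balls, which requires correctly relating the injectivity radius of $X$ to the girth of $X/\partial X$. One must handle boundary-adjacent balls (whose shape may be a ``half-edge'' or contain several leaves, cf.\ Figure \ref{fig:e-balls}) and argue carefully that cycles in $X$ cannot be shorter than $\mathcal{C}$, using that cycles avoid the leaves so that the projection $X\to X/\partial X$ preserves their length.
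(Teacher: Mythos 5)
Your proof is correct, but it travels a genuinely different route from the paper's. For $(i)$ the paper never passes to the universal cover: it shows directly that a disconnected pairwise intersection $A_i\cap A_j$ would force $\widetilde{H}_1(A_i\cup A_j)\neq 0$ via the reduced Mayer--Vietoris sequence, i.e.\ a cycle of length at most $4\varepsilon<\mathcal{C}$, a contradiction, and then inducts on the number of sets. For $(ii)$ the paper reduces to the one-dimensional Helly theorem for intervals on a line, which requires an ad hoc case analysis locating the pairwise intersection points $p_{i,j}$ on paths inside the tree $A_1\cup A_2\cup A_3$, followed by another induction. Your argument replaces both inductions and the case analysis by two uniform devices: the observation that any configuration of mutually relevant balls sits inside a single ball of radius $2\varepsilon$ (resp.\ $3\varepsilon$) $<\mathcal{C}/2$, which lifts isometrically to the universal cover and is therefore a subtree, and the Helly property for subtrees of a tree, which handles all $k$-fold intersections simultaneously. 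What your approach buys is economy and a cleaner logical structure (the tree Helly property subsumes the paper's path-based case analysis, which is the sketchiest part of the published argument); what it costs is the preliminary injectivity-radius claim, including the comparison of the girths of $X$ and $X'=X/\partial X$, which you correctly identify and dispatch by noting that cycles avoid the leaf vertices and hence descend isometrically to $X'$. One small point worth writing out if you expand this: when you call each $A_{i_j}$ a ``subtree'' of the ambient ball $B(p,2\varepsilon)$, you should note that a geodesic of $X$ from $\xi_{i_j}$ to a point of $A_{i_j}$ has length at most $\varepsilon$ and so stays inside the simply connected set $B(p,2\varepsilon)$, forcing it to coincide with the unique arc of that tree; this is what makes $A_{i_j}$ convex there. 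With that remark the appeal to facts (a) and (b) is airtight.
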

\begin{proof}
 For $(i)$ we must show that every $k$-fold intersection $A_{\{i_1\}}\cap A_{\{i_2\}}\cap\ldots \cap A_{\{i_k\}}$ has a homotopy type of a point. Because $\text{diam}(A_{\{i\}})< \mathcal{C}$, $A_{\{i\}}$ is a connected tree and therefore contractible, which shows the claim for $k=1$.  For $k=2$, first suppose that a nonempty intersection $A_{\{i\}}\cap A_{\{j\}}$ is disconnected i.e. $\dim(\widetilde{H}_0(A_{\{i\}}\cap A_{\{j\}}))\geq 1$ (where $\widetilde{H}_\ast(\,\cdot\,)$ denotes the reduced homology groups c.f. \cite{Hatcher02}). Since $A_{\{i\}}$ and $A_{\{j\}}$ are connected, the reduced Mayer-–Vietoris  sequence  for $A_{\{i\}}\cap A_{\{j\}}$ then simplifies to
\[
 0\longrightarrow \widetilde{H}_1(A_{\{i\}}\cup A_{\{j\}})\longrightarrow \widetilde{H}_0(A_{\{i\}}\cap A_{\{j\}})\longrightarrow \widetilde{H}_0(A_{\{i\}})\oplus\widetilde{H}_0(A_{\{j\}})\cong \{0\},
\]
We obtain  $\widetilde{H}_1(A_{\{i\}}\cup A_{\{j\}})\cong \widetilde{H}_0(A_{\{i\}}\cap A_{\{j\}})\cong \R^k$ for some $k\geq 1$, which implies that $A_{\{i\}}\cup A_{\{j\}}$ contains a nontrivial cycle. This however contradicts the fact that $\text{diam}(A_{\{i\}}\cup A_{\{j\}})\leq 4\,\varepsilon< \mathcal{C}$.  Thus $k$ has to vanish and $A_{\{i\}}\cap A_{\{j\}}$ must be connected, contain no cycle, and is therefore  contractible. Now, for an induction 
step with respect to $k$, it suffices to apply the previous step to  $A'=A_{\{i_1\}}\cap A_{\{i_2\}}\cap\ldots \cap A_{\{i_k\}}$ and $A''=A_{\{i_{k+1}\}}$.

Before proving $(ii)$, recall the $1$-dimensional version of Helly's Theorem (c.f. \cite{Eckhoff93}) implies that given a finite collection 
of intervals $\{C_1,C_2,\ldots,C_n\}$ on $\R$, if the intersection of each pair is nonempty, i.e. $C_i\cap C_j\neq \text{\rm \O}$, for every $1\leq i,j\leq n$, then $\bigcap^n_{i=1} C_i\neq \text{\rm \O}$.

First consider the case of  $3$-fold intersections, i.e. supposing that $A_{\{j\}}\cap A_{\{k\}}\neq \text{\rm \O}$, $1\leq k\neq j\leq 3$. We aim to show that $A_{\{1\}}\cap A_{\{2\}}\cap A_{\{3\}}\neq \text{\rm \O}$. Observe that $V=A_{\{1\}}\cup A_{\{2\}}\cup A_{\{3\}}$ is connected and by the argument of $(i)$ it must be a connected tree, i.e. contains no cycles. Let $p_{1,2}$, $p_{2,3}$, $p_{1,3}$ be distinct points in $V$ such that $p_{i,j}\in A_{\{i\}}\cap A_{\{j\}}$. 
Note that for each pair: $p_{i,j}$, $p_{s,t}$ there exists a path in $V$ connecting these points. We now consider two cases: (1) one of these paths, we denote by $l$, contains all three points $p_{i,j}$, then  the collection $\{C_i\}$, $C_i=l\cap A_{\{i\}}$, $i=1,2,3$ satisfies the assumptions 
of Helly's Theorem which implies the claim. (2) none of the paths between paris of  $p_{i,j}$'s contain the third point. Consider two shortest paths: 
$l_1$ between $p_{1,2}$ and $p_{2,3}$, and $l_2$  between $p_{1,2}$ and $p_{2,3}$ then $l_{1,2}=l_1\cap l_2$ is a segment between $p_{1,2}$ and some vertex of $v\in V$. 
The vertex $v$ has to be in one of $A_{\{j\}}$'s, w.l.o.g. suppose $v\in A_{\{2\}}$ (as other cases are analogous.) Then if $v$ is also in $A_{\{1\}}$ or $A_{\{3\}}$ we can take $p_{1,2}$ or $p_{2,3}$ equal to 
$v$ and use (1). If $v\notin A_{\{1\}}$ and $v\notin A_{\{3\}}$ then we observe that either $A_{\{1\}}$ or $A_{\{3\}}$ is disconnected which is not the case. This concludes the proof of $(ii)$ for the $3$-fold case, the general case can be obtained by induction.
\end{proof}

\section{Complete coverage probability.}\label{sec:complete-cover-prob}

\no In this section we interpret results of Sections \ref{sec:cov-via-nerve}--\ref{sec:nerve-A-rips} in the random setting.

\subsection{Random coverings and the random nerve}\label{sec:rand-nerve}  
 
Suppose $\mathsf{A}=\{\mathsf{A}_{\{i\}}\}$ is a random covering of a metric space $X$. We define the {\em nerve} $\mathcal{N}(\mathsf{A})$ of $\mathsf{A}$ by defining a probability measure $\mathbb{P}_{\mathsf{A}}$ on $\mathfrak{C}_n$  via the process elucidated in Section \ref{sec:introduction} in \eqref{eq:p_I} and \eqref{eq:p_s}. Observe that given a subspace $Y\subseteq X$ we obtain an induced random covering $\mathsf{A}_Y$ from $\mathsf{A}$:
\[
 \mathsf{A}_Y=\{\mathsf{A}_{\{1\}}\cap Y,\mathsf{A}_{\{2\}}\cap Y,\ldots, \mathsf{A}_{\{n\}}\cap Y\}
\]
The definition of $\Pm_{\mathsf{A}}$ extends to pairs $(\mathcal{N}(\mathsf{A}),\mathcal{N}(\mathsf{A}_Y))$ in an obvious way. In particular given $(\ts,\tr)\in\mathfrak{C}_n\times\mathfrak{C}_n$, we set 
\begin{equation}\label{eq:p_sr}
\begin{split}
 p_{\ts,\tr} & =\Pm(\{(\tk,\tl)\in \mathfrak{C}_n\times\mathfrak{C}_n \ |\ \ts\subseteq \tk,\tr\subseteq \tl\})\\
 & =\Pm\bigr(\forall_{I\in \ts}\bigl\{\bigcap_{i\in I} \mathsf{A}_{\{i\}}\neq \text{\rm \O}\bigr\}\,, \forall_{\{J\}\in \tr}\bigl\{\bigcap_{j\in J} \mathsf{A}_{\{j\}}\cap Y\neq \text{\rm \O}\bigr\}\,\bigl).
\end{split}
\end{equation}
Clearly, $\NA$ is a random complex, and $(\NA,\mathcal{N}(\mathsf{A}_Y))$ is a random pair.
We say a finite random covering $\{\mathsf{A}_{\{i\}}\}_{i=1,\ldots,n}$ of $X$ is {\em good} if and only if it is a good covering on $X$ almost surely. Further, we say a random covering $\mathsf{A}=\{\mathsf{A}_{\{i\}}\}$ of a pair $(X,\partial X)$ is good provided it is a good covering of $X$ and $\mathsf{A}_{\partial X}$ is a good covering of $\partial X$.  $|\mathsf{A}|$ will denote the random set $\bigcup_i \mathsf{A}_{\{i\}}$. 

\subsection{Proof of the extended version of Theorem \ref{thm:euler-coverage}} Let $\tChi_{rel}(\mathsf{A},\mathsf{A}_{\partial X})$ be the relative Euler characteristic of the pair $(\mathcal{N}(\mathsf{A}),\mathcal{N}(\mathsf{A}_{\partial X}))$.
We may now state Theorem \ref{thm:euler-coverage} for a general 1--complex $X$. 
\begin{theorem}[Coverage probability of a 1-complex $X$ with $\partial X\neq \text{\rm \O}$]\label{thm:euler-coverage-rel}
 Let $\mathsf{A}=\{\mathsf{A}_{\{i\}}\}$, $i=1,\ldots,n$ be a random good covering of the pair $(X,\partial X)$. Then, the range of $\tChi_{rel}(\mathsf{A},\mathsf{A}_{\partial X})$ can be restricted to 
\begin{equation}\label{eq:range-chi-A-rel}
 \underline{m}=\chi_{rel}(X,\partial X)\leq \tChi_{rel}(\mathsf{A},\mathsf{A}_{\partial X})\leq n=\overline{m},
\end{equation}
and the complete coverage probability equals
\begin{equation}\label{eq:cover-prob-euler-rel}
\begin{split}
 \Pm(X\subseteq |\mathsf{A}|) & =\Pm\bigl(\tChi_{rel}(\mathsf{A},\mathsf{A}_{\partial X})=\chi_{rel}(X,\partial X)\bigr),\\
 & =\sum_{(\ts,\tr)\in \mathfrak{C}_n\times\mathfrak{C}_n} a_{\ts,\tr}(\tChi_{rel})\,p_{\ts,\tr},
\end{split}
\end{equation}
where $a_{\ts,\tr}(\tChi_{rel})=a_{\ts,\tr,0}(\tChi_{rel})$ are defined in \eqref{eq:euler-dist-rel} of Corollary \ref{cor:euler-dist-rel}, and $p_{\ts,\tr}$ in \eqref{eq:p_sr}.
\end{theorem}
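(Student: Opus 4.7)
The strategy is to combine Lemma \ref{lem:chi-covers-bdry} with Corollary \ref{cor:euler-dist-rel}, reducing the coverage probability to the atomic probability $\Pm(\tChi_{rel}=\chi_{rel}(X,\partial X))$. Since $\mathsf{A}$ is almost surely good on $(X,\partial X)$, Lemma \ref{lem:chi-covers-bdry} applies pathwise and yields the a.s.\ identity of events
\[
\{X\subset |\mathsf{A}|\}=\{\tChi_{rel}(\mathsf{A},\mathsf{A}_{\partial X})=\chi_{rel}(X,\partial X)\},
\]
which supplies the (implicit) first equality in \eqref{eq:cover-prob-euler-rel}.

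Next I would verify the range restriction \eqref{eq:range-chi-A-rel}. Applying the Nerve Lemma to both $\mathsf{A}$ and $\mathsf{A}_{\partial X}$, and noting that $\mathcal{N}(\mathsf{A}_{\partial X})$ is a disjoint union of simplices indexed by covered boundary points (so its Euler characteristic equals $\#\{|\mathsf{A}|\cap\partial X\}$), one obtains almost surely
\[
\tChi_{rel}(\mathsf{A},\mathsf{A}_{\partial X})=\chi(|\mathsf{A}|)-\#\{|\mathsf{A}|\cap\partial X\}.
\]
The lower bound then follows by combining Proposition \ref{prop:chi-ineq} ($\chi(|\mathsf{A}|)\geq \chi(X)$) with the trivial inequality $\#\{|\mathsf{A}|\cap\partial X\}\leq \#\partial X$; the upper bound $\tChi_{rel}\leq n$ follows from $\chi(\NA)\leq \beta_0(\NA)\leq n$ (since $\NA$ has at most $n$ vertices and, being homotopy equivalent to a $1$-complex by the Nerve Lemma, has $\beta_i=0$ for $i\geq 2$) together with $\chi(\mathcal{N}(\mathsf{A}_{\partial X}))\geq 0$.

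The formula \eqref{eq:cover-prob-euler-rel} then drops out of Corollary \ref{cor:euler-dist-rel} specialized to $j=0$ (so that $\underline{m}+j=\chi_{rel}(X,\partial X)$) after setting $a_{\ts,\tr}(\tChi_{rel}):=a_{\ts,\tr,0}(\tChi_{rel})$; the probabilities $p_{\ts,\tr}$ appearing there coincide with those defined in \eqref{eq:p_sr}. The main obstacle is that Corollary \ref{cor:euler-dist-rel} was stated with respect to the a priori bounds \eqref{eq:euler-range-rel}, whereas we wish to apply it with the \emph{tightened} range $[\chi_{rel}(X,\partial X),n]$. Since the Vandermonde coefficients $v_{kj}$ depend explicitly on $\underline{m},\overline{m}$, one must argue that Lemma \ref{lem:X-dist} may be re-invoked directly with respect to the true support of $\tChi_{rel}$ rather than any enveloping interval; this is legitimate precisely because the previous step pins that support to the tighter range $[\chi_{rel}(X,\partial X),n]$ almost surely, so the Vandermonde inversion of the moment system is unambiguous.
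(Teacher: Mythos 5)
Your proposal is correct and follows essentially the same route as the paper: Lemma \ref{lem:chi-covers-bdry} gives the almost-sure identification of the coverage event with $\{\tChi_{rel}=\chi_{rel}(X,\partial X)\}$, the range \eqref{eq:range-chi-A-rel} is pinned down via Proposition \ref{prop:chi-ineq}, and Corollary \ref{cor:euler-dist-rel} at $j=0$ yields the formula. You supply slightly more detail than the paper on the upper bound and on re-running the Vandermonde inversion over the tightened range, but these are elaborations of the same argument, not a different one.
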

\begin{proof}
Under the given assumptions,  Lemma \ref{lem:chi-covers-bdry}  implies
\begin{equation}\label{eq:coverage=chi-rel}
\Pm(X\subseteq |\mathsf{A}|)=\Pm\bigl(\tChi_{rel}(\mathsf{A},\mathsf{A}_{\partial X})=\chi_{rel}(X,\partial X)\bigr).
\end{equation}
At this point the formula \eqref{eq:euler-dist-rel} of Corollary \ref{cor:euler-dist-rel} can be applied to the random pair $(\NA$, $\mathcal{N}(\mathsf{A}_{\partial X}))$ to give an exact expression 
for  $\Pm\bigl(\tChi_{rel}(\mathsf{A},\mathsf{A}_{\partial X})=\chi_{rel}(X,\partial X)\bigr)$. In this particular case the range of $\tChi(\mathsf{A},\mathsf{A}_{\partial X})$ is given by \eqref{eq:range-chi-A-rel}, 
where the lower bound follows from Proposition \ref{prop:chi-ineq}, and the upper bound corresponds to the case when elements of the covering $\mathsf{A}$ are pairwise disjoint and contained in $X-\partial X$, i.e. $\mathcal{N}(\mathsf{A})$ is just $n$ distinct points. 
The formula for $p_\ts$ in \eqref{eq:p_s-rips} is a direct consequence of Proposition \ref{prop:good-rips-eps-cover}, (see also Remark \ref{rem:prob-rips}).
\end{proof}

\begin{remark}
{\em
 Note that $\mathcal{N}(\mathsf{A}_{\partial X})$ generally contains high dimensional faces and therefore the chain expansion of $\tChi^k_{rel}$ in $\R_{\mathcal I}[e_I,w_J]$  involves monomials in $e_{\ts}$ and $w_{\tr}$. To simplify this expansion one may observe that $\mathcal{N}(\mathsf{A}_{\partial X})$ has a homotopy type of finitely many points or is empty. Specifically, from \eqref{eq:chiX=chiA-rel-partial} we have 
\[
 \tChi_{rel}(\mathsf{A},\mathsf{A}_{\partial X})=\tChi(\mathsf{A})-\#\{\mathsf{A}\cap \partial X\}.
\]
The random variable $\#\{\mathsf{A}\cap \partial X\}$ (counting points in $\mathsf{A}_{\partial X}$)
can be expressed as follows: 
\begin{equation}\label{eq:chi-rel-f}
 \tChi_{rel}(\mathsf{A},\mathsf{A}_{\partial X})=\tChi(\mathsf{A})-\sum^q_{i=1} w_{\{i\}}.
\end{equation}
where $\{1,\ldots,q\}$ label points of $\partial X$ and $\{w_{\{i\}}\}_{i=1,\ldots,q}$ are the indicator functions of points in  $\mathsf{A}_{\partial X}$. Consequently, we may derive expressions 
for powers $\tChi^k_{rel}$ as polynomials in  $\R[e_I,w_{\{i\}}]$. These  expansions of $\tChi^k_{rel}$ involve products of $e_{\ts}$ and $w_{\{i\}}$ only, which may provide a different way to express $\Pm(X\subseteq |A|)$.
}
\end{remark}

\begin{remark}\label{rem:prob-rips}
{\rm 
 In order to be more explicit about how the computation of $p_{\ts,\tr}$ simplifies in the case the nerve $\mathcal{N}(\mathsf{A})$ equals the Vietoris--Rips complex $\mathcal{R}(\mathsf{A})$, let us suppose $\mathsf{A}_{\{i\}}$ are $\varepsilon$-radius closed balls in $X$ with random centers $\xi_i\in X$. In $\mathcal{R}(\mathsf{A})$ any simplex indexed by $I=\{i_1,i_2,\ldots,i_k\}$ is determined by its edges, and an edge $\{i,j\}$ in $\mathcal{R}(\mathsf{A})$ occurs if and only if $|\xi_i-\xi_j|\leq 2\varepsilon$ (where $|\,\cdot\,-\,\cdot\,|$ is a short notation for the distance $d_X(\cdot,\cdot)$ on $X$). For instance, we have  
\[
 p_I=\Pm(\mathsf{A}_{\{i_1\}}\cap \mathsf{A}_{\{i_2\}}\cap \ldots \cap \mathsf{A}_{\{i_k\}}\neq \text{\rm \O})=\Pm\bigl(|\xi_{i_s}-\xi_{i_t}|\leq 2\varepsilon\ |\ \forall_{s,t}\ s\neq t\bigr).
\]
Enumerate points in $\partial X$ as follows $\{x_1,x_2,\ldots, x_{M}\}$, $M=\#\{\partial X\}$.  Now, $p_{\ts,\tr}$ given in \eqref{eq:p_sr} is just a volume of the set 
\[
  A_{\ts,\tr}=\{(\xi_1,\ldots,\xi_n)\in X^n\ |\ \forall_{I\in \ts}\forall_{\substack{s,t\in I,\\
s\neq t}}\ |\xi_s-\xi_t|\leq 2\varepsilon,\ \forall_{I\in \tr}\, \exists_{1\leq s\leq M}\ \forall_{i\in I}\ |\xi_i-x_s|\leq \varepsilon\},
\]
\no which in the case $\Pm=d\xi_1\,d\xi_2\ldots d\xi_n$ (i.e. $\xi_i$'s are independent) can be computed 
via ordinary calculus techniques or estimated numerically. These formulas further simplify, if $\partial X=\text{\rm \O}$, but we do not attempt these computations here.
}
\end{remark}

\section{Proof of Theorem \ref{thm:coverage-upperbounds}}\label{sec:estimates}

In this section we use the method of finite differences, c.f. \cite{Alon-Spencer-book08}, to give an upper bound for the complete coverage probability in terms of the expected Euler characteristic and prove Theorem \ref{thm:coverage-upperbounds}. Let $\{\mathsf{A}_{\{i\}}\}$, $i=1,\ldots,n$ be a finite good covering of $X$, consider the following shifted version of the relative Euler characteristic $\tChi_{rel}(\mathsf{A},\mathsf{A}_{\partial X})$ of $(\mathcal{N}(\mathsf{A}),\mathcal{N}(\mathsf{A}_{\partial X}))$:
\[
\tChi_0=\tChi_{rel}(\mathsf{A},\mathsf{A}_{\partial X})-\underline{m},
\]
where $\underline{m}=\chi_{rel}(X,\partial X)$. From \eqref{eq:chiX-betti} we obtain  
\begin{equation}\label{eq:chi-betti-1comp}
 \tChi_{rel}(\mathsf{A},\mathsf{A}_{\partial X})=\pmb\beta_0-\pmb\beta_1,
\end{equation}
where $\pmb\beta_\ast=\pmb\beta_\ast(\mathsf{A},\mathsf{A}_{\partial X})$ stand for the random relative Betti numbers. 
Recall that $\{e_I, w_J\}$, $I,J\in f(n)$ stand for the indicator functions of faces in $(\mathcal{N}(\mathsf{A}),\mathcal{N}(\mathsf{A}_{\partial X}))$. 

We will consider a filtration by random vectors $V_i$ denoting  
$(e_{I(i)},f_{J(i)})$ where  $I(i), J(i)\in f(n)$ are  subsets of  $\{1,\ldots, i\}$. Note that $V_i$ reveals subcomplexes in $\mathfrak{C}_n$ spanned by vertices $1$ through $i$. 
By analogy to the setting of Erd{\H{o}}s--R{\'e}nyi model   \cite{Alon-Spencer-book08}, we set up a {\em vertex exposure martingale}, associated with $\tChi_0$ and $\{V_i\}$ as follows: 
\begin{equation}\label{eq:simplex-martingale}
 \mathtt{Y}_{0}=\mu_0=E(\tChi_0),\qquad \mathtt{Y}_{i}=E(\tChi_0\ |\ V_i),\qquad i=1,\ldots,n.
\end{equation}
 Clearly, $\mathtt{Y}_n=\tChi_0$ and the sequence $\{\mathtt{Y}_i\}$ is an instance of Doob's martingale \cite{Alon-Spencer-book08}. Recall the following variant of the Azuma–-Hoeffding inequality \cite{Alon-Spencer-book08, Azuma67}, for $\{\mathtt{Y}_i\}$: 
\begin{equation}\label{eq:azuma}
 \Pm(\mathtt{Y}_{n}-\mathtt{Y}_0\leq -a)\leq \exp\Bigl(\frac{-a^2}{2\sum^{n}_{i=1} c^2_i}\Bigr)
\end{equation}
where $a>0$, and $c_i$ is a difference estimate
\begin{equation}\label{eq:diff-azuma}
 |\mathtt{Y}_i-\mathtt{Y}_{i-1}|\leq c_i.
\end{equation}
\no Exposing a vertex (or a face containing it) changes $\pmb\beta_0$ by at most $1$ and $\pmb\beta_1$
by at most $\pmb\beta_1(X,\partial X)=1-\chi_{rel}(X,\partial X)$ thus we obtain
\[
 |\mathtt{Y}_i-\mathtt{Y}_{i-1}|\leq 2+|\chi_{rel}(X,\partial X)|.
\]
Let $a=\mu_0$, then 
\[
\Pm(\tChi_0 =0)=\Pm(\tChi_0 \leq \mu_0-a)=\Pm(\mathtt{Y}_{n}-\mathtt{Y}_0\leq -a).
\]
Using the above estimates for $c_i$ and \eqref{eq:azuma} yields
\[
\Pm(X\subseteq |\mathsf{A}|)=\Pm(\tChi_0 =0)\leq \exp\Bigl(\frac{-\mu^2_0}{2n(|\chi_{rel}(X,\partial X)|+2)^2}\Bigr),
\]
which completes the proof of Theorem \ref{thm:coverage-upperbounds}.

\appendix
\section{Auxiliary proofs for Section \ref{sec:topological}}\label{sec:appendix}

\begin{proof}[Proof of Proposition \ref{prop:chi-ineq}]
\noindent Consider the Mayer-Vietoris sequence applied to $U$ and $V$:
\[
0 \rightarrow H_1(U\cap V) \stackrel{j_1}{\rightarrow} H_1(U) \oplus H_1(V) \rightarrow H_1(X) \rightarrow H_0(U \cap V) \rightarrow H_0(U) \oplus H_0(V) \rightarrow H_0(X) \rightarrow 0.
\]
Since $U\cap V=\partial A$ is just finitely many points, in real coefficients  we have 
\[
0 \longrightarrow \R^{\beta_1(U)} \oplus \R^{\beta_1(V)} \stackrel{d_1}{\longrightarrow} \R^{\beta_1(X)} \longrightarrow \ldots
\]
From \eqref{eq:chiX-betti}, $\chi(X) = 1 - \beta_1(X), \ \chi(U)= \beta_0(U) - \beta_1(U), \ \chi(V) = \beta_0(V) - \beta_1(V)$. Since $d_1$ is injective we have $\beta_1(U) + \beta_1(V) \le \beta_1(X)$, which  implies $-\beta_1(X) + \beta_1(U) \le 0$. This proves \eqref{eq:betti_1X-U}.

Now to prove \eqref{eq:chiX-U} we have two cases to consider: $\beta_0(U) > 1$ and $\beta_0(U) =1$. First assume $\beta_0(U) >1$. We argue by contradiction. That is, suppose $\chi(U) \le \chi(X)$. Then $\beta_0(U) -\beta_1(U) \le \beta_0(X) - \beta_1(X)$ so that $\beta_0(U) \le \beta_1(U) + 1 - \beta_1(X)$. But $\beta_1(A) - \beta_1(X) \le 0$ by the previous lemma. Therefore we obtain $\beta_0(U) \le 1$ contrary to our assumption. Now assume $\beta_0(U) =1$. Then $\chi(U) = 1 - \beta_1(U)$ and $\chi(X) = 1 - \beta_1(X)$ which yields $\chi(U) - \chi(X) = -\beta_1(U)  + \beta_1(X) \ge 0$. Thus $\chi(U) \ge \chi(X)$.
\end{proof}
\begin{proof}[Proof of Corollary \ref{cor:chi-covers-no-bdry}]
 Notice that generally $X$ (even with $\partial X\neq\text{\rm \O}$) is homotopy equivalent to a bouquet of circles.
  If $|A|^c\neq \text{\rm \O}$ in $X$, then (since $|A|^c$ is open) we pick $p\in |A|^c$ which is not a vertex of $X$. Then $p$ is in the interior of one of the edges which we denote by $e$. We may homotopy $X$ away from the interior of $e$ to a bouquet of  $r$ circles $S=\bigvee^r S^1$ in such a way that $p$ is away from the wedge point (just collapse along the edges different from $e$). From Proposition  \ref{prop:chi-ineq},
\[
 \beta_1(|A|)\leq \beta_1\bigl(\bigvee^{r-1} S^1\vee (S^1-\{p\})\bigr)<\beta_1(S)=\beta_1(X).
\]
Thus $\beta_1(|A|)<\beta_1(X)$ and therefore $\chi(X)<\chi(|A|)$, which implies the claim.
\end{proof}
\begin{proof}[Proof of Lemma \ref{lem:chi-covers-bdry}]
Observe that $X\subseteq |A'|$  to $X\subseteq |A|$. Indeed, since $|A|$ is closed 
if $X-|A|\neq \text{\O}$ then we may choose a point in $x\in X-|A|$ such that $x\not\in \partial X$,
since the projection $q$ is a homeomorphism on $X-\partial X$, we conclude that $q(x)\not\in X'-|A'|$. Next, 
 Equation \eqref{eq:chiX=chiA-rel} follows immediately from Corollary \ref{cor:chi-covers-no-bdry}, the fact that $A$ and $A_{\partial X}$ are good and the identities 
\[
 \chi(|A'|)=\chi_{rel}(|A|,|A|\cap \partial X),\quad \chi(X')=\chi_{rel}(X,\partial X).
\]
Now, thanks to \eqref{eq:chi-rel} we compute
\[
\begin{split}
 \chi_{rel}(X,\partial X) & = \chi(X)-\#\{\partial X\},\\
 \chi_{rel}(|A|,|A|\cap \partial X) & = \chi(|A|)-\#\{|A|\cap \partial X\},
\end{split}
\]
which yields \eqref{eq:chiX=chiA-rel-partial}. 
\end{proof}

\bibliographystyle{plain}

\def\cprime{$'$}

\end{document}